\date{September 6, 2012}
\numberwithin{equation}{section}
\newtheorem{theorem}{Theorem}
\newtheorem{proposition}[theorem]{Proposition}
\newtheorem{lemma}[theorem]{Lemma}
\newtheorem{corollary}[theorem]{Corollary}
\theoremstyle{definition}
\theoremstyle{definition}\newtheorem{definition}[theorem]{Definition}
\theoremstyle{definition}\newtheorem{remark}[theorem]{Remark}
\theoremstyle{definition}
\theoremstyle{definition}
\theoremstyle{definition}
\theoremstyle{definition}
\numberwithin{theorem}{section}
\def\proofof [#1] {\noindent {\bf Proof of #1. } }
\def\al #1.{{\mathcal{#1}}}
\newcommand{\KK}{\mathfrak{K}_\A}
\newcommand{\A}{\mathcal{A}}
\newcommand{\Ring}{\mathcal{R_A}}
\newcommand{\RRing}{\mathcal{\tilde{R}_A}}
\newcommand{\B}{\mathcal{B}}
\newcommand{\ZZ}{\mathcal{Z}}
\newcommand{\I}{\mathcal{I}}
\newcommand{\K}{\mathcal{K}}
\renewcommand{\H}{\mathcal{H}}
\newcommand{\N}{\mathbb{N}}
\newcommand{\NN}{\mathbb{N}_0}
\newcommand{\R}{\mathbb{R}}
\newcommand{\C}{\mathbb{C}}
\renewcommand{\S}{S^1}
\newcommand{\Z}{\mathbb{Z}}
\newcommand{\unit}{\mathbf{1}}
\newcommand{\bp}{\begin{proof}}
\newcommand{\ep}{\end{proof}}
\newcommand{\bdp}{\begin{dproof}}
\newcommand{\edp}{\end{dproof}}
\newcommand{\ra}{\rightarrow}
\newcommand{\Diff}{\operatorname{Diff}(\S)}
\newcommand{\PSL}{\operatorname{PSL(2,\R)}}
\newcommand{\Mob}{\operatorname{PSL(2,\R)}}
\newcommand{\PSI}{\operatorname{PSL(2,\R)}^{(\infty)}}
\newcommand{\PST}{\operatorname{PSL(2,\R)}^{(2)}}
\newcommand{\Mat}{\operatorname{M}}
\newcommand{\Ad}{\operatorname{Ad }}
\newcommand{\Alg}{\operatorname{Alg}}
\newcommand{\rmi}{\operatorname{i}}
\newcommand{\irr}{\operatorname{irr}}
\newcommand{\rme}{\operatorname{e}}
\newcommand{\red}{\operatorname{red}}
\newcommand{\locn}{\operatorname{ln}}
\renewcommand{\ker}{\operatorname{ker }}
\newcommand{\End}{\operatorname{End}}
\newcommand{\id}{\operatorname{id}}
\newcommand{\ie}{{i.e.,\/}\ }
\newcommand{\cf}{{cf.\/}\ }
\title{\huge Representations of Conformal Nets, Universal C*-Algebras and K-Theory}
\author{
\phantom{X}\\
{\sc Sebastiano Carpi}$^{1}$\footnote{Supported in part by the ERC
Advanced Grant 227458 "Operator Algebras and Conformal Field Theory"},
{\sc Roberto Conti}$^{2}$,
{\sc Robin Hillier}$^{3*}$,
{\sc Mih\'aly Weiner}$^{4*}$\\
\phantom{X}\\
${}^1$ Dipartimento di Economia,
Universit\`a di Chieti-Pescara ``G. d'Annunzio''\\
Viale Pindaro, 42, I-65127 Pescara, Italy\\
E-mail: {\tt s.carpi@unich.it}\\
\phantom{X}\\
${}^2$ Dipartimento di Scienze di Base e Applicate per l'Ingegneria\\ 
Sezione di Matematica, Sapienza Universit\`a di Roma \\
Via A. Scarpa, 16, I-00161 Roma, Italy\\
E-mail: {\tt roberto.conti@sbai.uniroma1.it}\\
\phantom{X}\\
${}^3$
Dipartimento di Matematica,
Universit\`a di Roma ``Tor Vergata''\\
Via della Ricerca Scientifica, 1, I-00133 Roma, Italy\\
E-mail: {\tt hillier@mat.uniroma2.it}\\
\phantom{X}\\
${}^4$ Mathematical Institute, Department of Analysis, \\
Budapest University of Technology \& Economics (BME) \\
M\"uegyetem rk. 3-9, H-1111 Budapest, Hungary \\
E-mail: {\tt mweiner@renyi.hu}\\
}
\begin{document}
\maketitle
 
\begin{abstract} 

We study the representation theory of a conformal net $\A$ on $S^1$ from a K-theoretical point of view using its universal C*-algebra $C^*(\A)$. We prove that if $\A$ satisfies the split property then, for every representation $\pi$ of $\A$ with finite statistical dimension, 
$\pi(C^*(\A))$ is weakly closed and hence a finite direct sum of type I$_\infty$ factors. We define the more manageable locally normal universal 
C*-algebra $C_{\rm ln}^*(\A)$ as the quotient of $C^*(\A)$ by its largest ideal vanishing in all locally normal representations and we investigate its structure. In particular, if $\A$ is completely rational with $n$ sectors, then $C_{\rm ln}^*(\A)$ is a direct sum of $n$ type I$_\infty$ factors. Its ideal $\KK$ of compact operators has nontrivial K-theory, and we prove that the DHR endomorphisms of  
$C^*(\A)$ with finite statistical dimension act on 
$\KK$, giving rise to an action of the fusion semiring of DHR sectors on $K_0(\KK)$. Moreover, we show that this action corresponds to the regular representation of the associated fusion algebra. 

\end{abstract}

\section*{Introduction}

Conformal field theory (CFT) is the theory of quantum fields with conformal 
symmetry, see e.g. \cite{DMS96}. This subject is interesting on its own and because it provides 
further insight into the structural aspects of more general quantum field theories. 
Moreover it is deeply related with many areas of theoretical physics (e.g. string theory, critical phenomena)
and of mathematics (e.g. number theory, topology of three-dimensional manifolds, infinite-dimensional Lie algebras and Lie groups, quantum groups, subfactors). Of particular importance is the two-dimensional case. Here, the chiral fields, namely the fields depending on one light ray coordinate only, play a special role and one is led to study the corresponding chiral theories on the light ray $\R$ or more frequently on its compactification $S^1$ where the conformal symmetry naturally acts.

The operator algebraic approach to two-dimensional chiral CFT (the case we are going to study here) goes through conformal nets on $S^1$, 
namely inclusion-preserving maps 
\[
  I \mapsto \A(I)
\]
from the set of proper (nonempty, nondense, open) intervals of the unit circle $\S$ into the family of von Neumann algebras (actually type III$_1$ factors) acting on a fixed separable Hilbert space and covariant under a given representation of the conformal group, 
\cf \cite{FRS2,FG,GL2}. Although there are various important differences mainly due to the low-dimensional spacetime topology, these are the one-dimensional analogues of the nets of local von Neumann algebras on Minkowski spacetime (Haag-Kastler nets) which are the basic ingredients in the so called algebraic quantum field theory (local quantum physics),  see \cite{Haag} for a standard reference book. As a consequence, besides the powerful functional analytic methods of the theory  of operator algebras, the operator algebraic approach allows the study of CFT through the theory of superselection sectors of Doplicher, Haag and Roberts (DHR), \cite{DHR71,DHR74}.

From the beginning the operator algebraic approach to CFT has shown to provide a natural framework for classification \cite{BMT88} and structural analysis \cite{FRS2, GL1, FG, Reh}. A central role has been played by the deep relations with the theory of subfactors initiated by Jones in \cite{Jo83} and the Tomita-Takesaki modular theory of von Neumann algebras \cite{Tak70}. On the one hand the connections with the theory of subfactors have occurred through the analysis of the representations of the braid group which emerge in the study of the DHR statistics in low spacetime dimensions
\cite{FRS89,FRS2} and, more explicitly, through the relationship between the Jones index and the statistical dimension of superselection sectors discovered by Longo \cite{Lon89,Lon90}. On the other hand, as shown by Brunetti, Guido and Longo \cite{BGL93} and (independently) by Fr\"{o}hlich and Gabbiani, \cite{FG} the action of the modular group of local von Neumann algebras with respect to the vacuum state, can always be described in terms of the conformal symmetry.
Subsequently this approach produced a large number of remarkable results not only for the study of CFT but also for its impact on the theory of subfactors. A very important step in these later developments was made by Wassermann \cite{Was98} with the computation of the Connes fusion for the positive energy representations 
of the loop groups $L{\rm SU(N)}$. As a consequence of this work the composition of DHR sectors of the associated conformal nets turns out to correspond to the Verlinde fusion. On the basis of this result many other DHR fusion rings have been subsequently determined and found out to be isomorphic to the corresponding Verlinde rings. It also served as a starting point for the work of Xu \cite{Xu00} on Jones-Wassermann subfactors for disconnected intervals which provided inspiration and examples for the notion of completely rational nets (conformal net analogues of rational chiral conformal field theories) introduced shortly after by Kawahigashi, Longo and M{\"u}ger \cite{KLM}.
We refrain from giving further details on the various achievements of the operator algebraic approach to CFT but we just mention two of the most representative recent results: the classification of conformal nets with central charge $c <1$ by Kawahigashi and Longo \cite{KL04} and the development of the technique of mirror extensions for the construction of new nets \cite{Xu07}. 

Although various areas in the theory of operator algebras, in particular the Tomita-Takesaki modular theory for von Neumann algebras and the 
theory of subfactors, have played a central role, noncommutative geometry \cite{C-book} and K-theory for operator algebras  \cite{Bl} experienced -- with few exceptions -- only marginal 
considerations in the conformal nets setting. In \cite{Lo01} Longo suggested to consider quantum field theories and their superselection sectors as quantum analogue of infinite-dimensional manifolds and elliptic operators respectively in order to look for a geometrical interpretation of  the Jones index for subfactors through the statistical dimension of superselection sectors (quantum index theorem). 
In \cite{KL05} Kawahigashi and Longo, with the above picture as a background, studied the asymptotic behavior of the characters of completely rational conformal nets on $S^1$ guided by the classical analogy of Weyl's asymptotics for the trace of the heat kernel. Later, Carpi, Kawahigashi and Longo \cite{CKL08} have began a systematic study of local nets on $S^1$ with superconformal symmetry (superconformal nets). One of the motivations for their work concerned the relations with the noncommutative geometrical framework of Connes. In particular they proved a formula  relating the Fredholm index of the (upper off diagonal part of) the supercharge operator in Ramond representations to the Jones index of subfactors and they proposed to study the spectral triples having these supercharge operators as Dirac operators together with the corresponding JLO cocycles in entire cyclic cohomology (Chern characters in quantum K-theory) \cite{JLO1}. An important step in this direction has been done by 
Carpi, Hillier, Kawahigashi and Longo in \cite{CHKL10} with the definition of nets of spectral triples associated to the unitary representations of the (N=1) super-Virasoro algebras. This requires the solution of nontrivial technical domain problems. In the introduction of the same paper
(see also the outlook) an outline of a general ``noncommutative geometrization" program for CFT is also given. This program has been recently carried out by Carpi, Hillier and Longo \cite{CHL12} through a general procedure which allows to associate spectral triples and hence entire cyclic cohomology classes of suitably chosen global algebras of differentiable operators associated to the given net and its superselection sectors. 
Although this procedure gives rise to serious domain problems it is shown by using the index pairing with K-theory \cite{C-book} that the cohomology classes associated to superselection sectors actually provide nontrivial geometric invariants. 

At this point a natural question arises. Would it be possible to take a dual (topological) point of view and study the superselection sectors directly in K-theory without using the spectral triples (differentiable structure) and the corresponding entire cyclic cohomology classes as a starting point? Such an approach would seem to involve various advantages: it would eliminate the need to restrict to nets on $S^1$ with superconformal symmetry; it would avoid the serious domain problems inherent in the use  of Connes' spectral triples; it could give further insights into the spectral triples approach; it could shed light on possible connections with other situations where K-theory appears in quantum field theory, see e.g. \cite{Free02}. In this paper we propose a first investigation in this direction. 

In order to explain some of the main ideas underlying this work  we briefly describe the analogous situation on four-dimensional 
Minkowski spacetime, for which the DHR theory of superselection sectors was originally formulated.  Here one deals with nets 
${\mathcal O} \mapsto {\mathfrak A}({\mathcal O})$ of von Neumann algebras where ${\mathcal O}$ runs over the set of spacetime double cones. In order to capture the global nature of the superselection sectors one has to define a
suitable global  C*-algebra. To this end, since the  set of double cones is directed under inclusion, one can consider the  C*-inductive limit of the local von Neumann algebras 
${\mathfrak A}({\mathcal O})$, which here, as usual, will be denoted by ${\mathfrak A}$ as the underlying net. The superselection sectors of the theory are realized as DHR 
(i.e. localized and transportable) unital endomorphisms of the  C*-algebra ${\mathfrak A}$. As a consequence the DHR-sectors act on the K-theory  of ${\mathfrak A}$ and one might be tempted to use this action in order to obtain K-theoretical invariants for the DHR sectors. 
To simplify our discussion we assume the split property which is expected to hold for any physically reasonable net (see  \cite[V.5]{Haag}). 
Then it turns out that the quasi-local  C*-algebra ${\mathfrak A}$ is a simple  C*-algebra with trivial K-theory and it is moreover independent of the underlying net (cf. Proposition \ref{PropQuasilocal}). In particular the action of DHR sectors on the K-theory of ${\mathfrak A}$ gives no information at all. 
 
For a conformal net $\A$ on $\S$ the situation is different: although the set of intervals is not directed, one can define in a suitable way a universal C*-algebra $C^*(\A)$. It is generated by the local algebras associated to all proper intervals of $\S$ and the equivalence classes of its irreducible locally normal representations are in one-to-one correspondence with the superselection sectors of the net.
It therefore contains a 
lot of information about the net $\A$, \cf \cite{Fre90,FRS2}. As in the case of the quasi-local 
 C*-algebra the superselection sectors can be realized as DHR endomorphisms of $C^*(\A)$, but in general $C^*(\A)$ is no longer simple and the vacuum 
representation is not faithful either. Moreover, as an abstract C*-algebra, it depends on the representation theory of the underlying net (cf. Proposition \ref{prop:U-propsCA}).

 Unfortunately we are not able to determine $C^*(\A)$  or its K-theory even in special cases. 
The problem is that the definition of $C^*(\A)$ involves plenty of representations of $\A$ which are not locally normal and thus difficult to handle. 
On the other hand these non-locally normal representations do not appear to have direct relevance for CFT. It seems therefore natural to 
consider a related global  C*-algebra here called the locally normal universal  C*-algebra of the net $\A$ and denoted by 
$C^*_{\rm ln}(\A)$. It can be defined as the quotient of $C^*(\A)$ by the largest of its ideals vanishing in all locally normal representations and 
it admits by definition faithful locally normal representations. Moreover, if one restricts to locally normal representations of the net $\A$ then 
 $C^*_{\rm ln}(\A)$ has all the relevant properties of $C^*(\A)$. In particular the superselection sectors of $\A$ can also be realized as  DHR-endomorphisms of $C^*_{\rm ln}(\A)$.  
 
One might be tempted to believe that the nontrivial topology of $S^1$ may result in a nontrivial K-theory for the algebra $C^*_{\rm ln}(\A)$.
However, one of the main results of this paper is the proof of the following surprising fact: for every completely rational conformal net $\A$ on $S^1$ the locally normal universal  C*-algebra $C^*_{\rm ln}(\A)$ is isomorphic to a finite direct sum of countably decomposable infinite-dimensional type I factors (one for each sector of $\A$), see Theorem 
\ref{piC*A_rational}. 
As a consequence  $K_0(C^*_{\locn}(\A))=0=K_1(C^*_{\locn}(\A))$ (Theorem \ref{K=0}). 
  Actually the above result follows from a more general fact (Theorem \ref{piC*A_reducible}): if $\A$ is a conformal net on $S^1$ satisfying the split property (but not necessarily completely rational) then $\pi(C^*(\A))$ is weakly closed for every locally normal representation $\pi$ with finite statistical dimension. 
 
 In order to overcome the problem of trivial K-theory we consider, for a given completely rational net $\A$, the largest norm-separable ideal 
 $\KK$ of  $C^*_{\rm ln}(\A)$.  If $\A$ has $n$ sectors then 
 $\KK$ is isomorphic to the direct sum of $n$ copies of the  C*-algebra $\K(\H)$ of compact operators on a separable infinite-dimensional Hilbert space  $\H$. 
We show that the DHR endomorphisms of $C^*_{\rm ln}(\A)$ with finite statistical dimension restrict to endomorphisms of $\KK$ and they give rise to an action of the fusion semiring ${\mathcal R}_\A$ of sectors with finite statistical dimension on $K_0(\KK)$ which corresponds to the regular representation of the fusion algebra generated by ${\mathcal R}_\A$ (Theorem \ref{th:K-sectors}). 

We end this introduction with some comments on possible connections with the work of Freed, Hopkins and Teleman on loop groups and twisted K-theory \cite{FHT11}, cf. also \cite{EG09} for relations with subfactors and modular invariants.  If $G$ is a simply connected compact Lie group and $LG = C^\infty(S^1,G)$ is the corresponding loop group then the projective unitary representation of $LG$ with lowest energy $0$ (vacuum representation) and level $k$ gives rise to a conformal net 
$\A_{G_k}$ on $S^1$, see \cite{FG}.  Assume  that $\A_{G_k}$ is completely rational and that the ring whose elements are the formal differences of elements in the semiring ${\mathcal R}_{\A_{G_k}}$ generated by the DHR sectors of $\A_{G_k}$ is isomorphic to the Verlinde fusion ring $R^k(LG)$ (this is known to be true e.g. for $G={\rm SU}(N)$ and all positive integers $k$ as a consequence of the results in \cite{Was98,Xu00} mentioned above, cf. also the comments at the end of Sect. \ref{sec:prelim} and 
Remark \ref{rem:CFT-univC} (4)).  
As shown in \cite{FHT11} $R^k(LG)$ is isomorphic to a twisted equivariant K-theory of the Lie group $G$.  Hence our results indirectly exhibit a relation between the above twisted equivariant K-theory of $G$ and the K-theory of the noncommutative  C*-algebra $\KK$ for $\A=\A_{G_k}$. 

\section{Preliminaries on conformal nets}\label{sec:prelim}
 
We fix the following notation: $\N$ and $\NN$ stand for the positive integers and the nonnegative integers, respectively. Given a (complex) Hilbert space $\H$, $B(\H)$ will stand for the 
C*-algebra of bounded linear operators on $\H$, $\K(\H)$ (or sometimes simply $\K$, if $\H$ is infinite-dimensional separable) for the 
C*-subalgebra of compacts on $\H$, $\otimes$ the minimal (spatial) tensor product of 
C*-algebras (\ie the completion with respect to the minimal  C*-norm on 
the algebraic tensor product), and $\bar{\otimes}$ the (weakly closed) 
tensor product of von Neumann algebras. We say that two $*$-isomorphisms $\phi_i : A \to B$, $i=1,2$ of the  C*-algebra $A$ into the unital  C*-algebra $B$ are unitarily equivalent in $B$ if there is a unitary $u \in B$ such that $\phi_1 = \Ad(u) \circ \phi_2$, where 
$\Ad(u)(b)  := ubu^*$ for all $b \in B$.

When composing homomorphisms of algebras or groups, we will usually drop the sign `` $\circ$ " between the two homomorphisms. Finally, we denote by ${\mathcal Z} (A)$ the center of any associative algebra $A$.

\subsection*{Möbius covariant nets}
 
Let $\I$ be the set of open, nonempty and nondense intervals of the unit circle 
$S^1 =\{z\in \C : \,|z|=1 \}$. Diffeomorphisms
of $S^1$ of the form $z \mapsto \frac{az+b}{\overline{b}z+\overline{a}}$ 
with $a,b\in \C$, $|a|^2-|b|^2=1$, are called {\it 
Möbius transformations}; they form a Lie group isomorphic to 
$\PSL$, and we shall identify these two groups henceforth. In particular, rotations are Möbius transformations and we shall use the symbol $R_\alpha$ to denote the anticlockwise rotation of $S^1$ 
by an angle $\alpha$.
 
A \emph{Möbius covariant net} $\A$ over $\S$ (\cf \cite{FG,GL2})
consists of a family of von Neumann algebras $\A(I)$ ($I\in\I$)
acting on a common separable Hilbert space $\H$ together with a given strongly 
continuous representation $U$ of $\PSL$ satisfying 
\begin{itemize}
\item {\it isotony}: $\A(I_1)\subset \A(I_2)$ if $I_1\subset I_2$,
\item {\it locality}: elements of $\A(I_1)$ commute with those of 
$\A(I_2)$ whenever $I_1\cap I_2 = \emptyset$,
\item {\it covariance}: $U(g)\A(I)U(g)^*=\A(g(I))$ for all $g\in \PSL$
and $I\in \I$,
\item
{\it positivity of the energy}:
the conformal Hamiltonian $L_0$, defined by 
the equation $U(R_\alpha)=e^{i\alpha L_0}$ ($\forall \alpha \in \R$), 
is positive,
\item
{\it existence, uniqueness and cyclicity of the vacuum:}
up to phase there exists a unique unit vector
$\Omega \in \H$ called the ``vacuum vector''
which is invariant under the action of $U$; moreover, 
it is cyclic for the von Neumann
algebra $\bigvee_{I\in\I}\A(I)$.
\end{itemize}
There are many known important consequences of the above definition. In 
particular, we shall often use the following ones (for proofs see \cite{FG,FJ96,GL2}).
{\it Reeh-Schlieder property:}
$\Omega$ is a cyclic and separating vector of the algebra $\A(I)$,
for every $I \in \I$. 
{\it Haag-duality:}
$\A(I)'= \A(I')$ for every $I \in \I$,
where $I'\in\I$ denotes the interior of the complement set of
$I$ in $S^1$. 
{\it Irreducibility}:
$\bigvee_{I\in\I}\A(I) = {\rm B}(\H)$.
{\it Factoriality}: (excluding the case dim$(\H)=1$ 
in which the net is trivial)
$\A(I)$ is a type III$_1$ factor
for every $I \in \I$. 
{\it Additivity}: if $I \subset \bigcup_{\beta} I_\beta$ 
then $\A(I)\subset \bigvee_{\beta}\A(I_\beta)$.
 
There are some further properties of $\A$ which do {\it not} follow from 
the above axioms, 
but are often satisfied in the physically interesting cases. 
{\it Strong additivity}: $\A(I)=\A(I_1)\vee \A(I_2)$ whenever 
$I_1,I_2,I\in \I$ are such that the closure of $I$ coincides with the 
closure of $I_1\cup I_2$. 
{\it Split property}: for any inclusion $\bar{I}_-\subset I_+$, there is a 
type I factor $F$ such that $\A(I_-)\subset F \subset \A(I_+)$; there is actually a canonical choice of $F$, \cf \cite{DL}, but this is irrelevant in the present publication.
{\it Complete rationality}: the net is strongly 
additive, split and its $\mu$-index, that is the index of the so-called ``2-interval inclusions'',
is finite. The $\mu$-index of a M\"{o}bius covariant net $\A$ on $S^1$ with the split property is defined as follows: let 
$I_1, I_2, I_3,I_4 \in \I$ be four intervals in anticlockwise order, obtained by removing four points from $S^1$. Then  
$\A(I_1) \vee \A(I_3) \subset (\A(I_2) \vee \A(I_4))'$ is an irreducible inclusion of type III factors (``2-interval inclusion'')
whose index $\left[(\A(I_2) \vee \A(I_4))' :\A(I_1) \vee \A(I_3)\right]$ does not depend on the choice of the intervals and is called the $\mu$-index of $\A$, cf. \cite{KLM} for more details.

\subsection*{Conformal nets}
 
Let $\Diff$ be the group of orientation preserving (smooth) 
diffeomorphisms of $S^1$, which we shall consider with the
usual $C^\infty$-topology. In what follows, we shall consider strongly 
continuous projective unitary representations of $\Diff$.
If $V$ is such a representation, then sometimes for a $g \in \Diff$ 
we shall think of $V(g)$ as a
unitary operator. Although there is more than one way to ``fix phases'',
note that expressions like $\Ad (V(g))$ or $V(g) \in M$ for a von
Neumann algebra $M \subset {\rm B}(\H)$ are unambiguous. 
We shall also say that $V$ is an extension of the unitary representation 
$U$ of $\Mob$ if we can arrange the phases in such a way that
$V(g)=U(g)$, or without mentioning phases: $\Ad(V(g))=\Ad(U(g))$, for all $g \in \Mob$.
 
A \emph{conformal net} is a Möbius covariant net $(\A,U)$ 
such that $U$ extends to a 
continuous projective unitary representation of $\Diff$ denoted again by $U$ and satisfying
\begin{itemize}
\item
$U(g)\A(I)U(g)^* = \A(gI),$
\item
$g|_I={\rm id}_I \Rightarrow
\Ad (U(g))|_{\A(I)}=\rm{id}_{\A(I)}$,
\end{itemize}
for all $g \in \Diff$ and $I \in \I$. Note that though there are some 
``pathological '' cases in which such an extension does not exist, when it 
{\it does} exist, it is unique, \cf \cite{CW05,Wei05}.
Note also that by Haag-duality, if a diffeomorphism is
localized in the interval $I$ --- i.e. it acts trivially elsewhere --- then, by the second listed property, the corresponding unitary is also localized in $I$ in the sense that it belongs to $\A(I)$. From now on all nets are supposed to be conformal.

\subsection*{Representations of conformal nets}
 
A \emph{representation} of $\A$ is a 
family $\pi=(\pi_I)_{I\in\I}$ of (unital) $*$-representations $\pi_I$ of $\A(I)$ on a 
common Hilbert space $\H_\pi$ such that $\pi_{I_2|\A(I_1)}=\pi_{I_1}$ 
whenever $I_1\subset I_2$. The representation $\pi$ is called 
{\it locally normal} if $\pi_I$ is normal for every $I\in \I$; 
by \cite[Theorem 5.1]{Tak79}, this is always the case if $\H_\pi$ is separable. Conversely if $\pi$ is a cyclic representation of $\A$, i.e.
$\bigvee_{I \in\I} \pi_I(\A(I))$ has a cyclic vector in $\H_\pi$, then $\H_\pi$ is separable as a consequence of the fact that  $\A(I)$ has separable predual for all $I\in \I$. Accordingly a representation $\pi$ of $\A$ is locally normal if and only if it is a direct sum of separable representations.  
The \emph{vacuum representation}  $\pi_0$ on the vacuum Hilbert space $\H_{\pi_0}  := \H$ is defined by 
$\pi_{0 , I}(x) = x$, for all $I\in \I$ and all $x\in \A(I)$ and it is obviously locally normal.    

Two representations $\pi_1$ and $\pi_2$ of the same net
are {\it equivalent} if there exists a unitary operator $u: \H_{\pi_1} \to \H_{\pi_2}$ such that 
$u\pi_{1,I}(x)=\pi_{2,I}(x)u$ for every $I\in\I$ and $x\in \A(I)$. We denote by $[\pi]$ the unitary equivalence class of a
representation $\pi$ of $\A$. 
The representation $\pi$ is called {\it irreducible}, if $\cap_{I\in\I} \pi_I(\A(I))' = \C \unit_{\H_\pi}$. 
The equivalence classes of irreducible locally normal representations of $\A$ are  called \emph{sector} (or superselection sector or DHR sector). 
The vacuum representation $\pi_0$ is irreducible and the corresponding sector $[\pi_0]$ is called the \emph{vacuum sector}. 

Let $\pi$ be a representation of $\A$ and let $I_1, I_2 \in \I$ be such that $I_2 \subset I_1'$. If $I_2 \neq I_1'$ then there is an interval $I_0\in\I$ containing $I_1 \cup I_2$. Hence $\pi_{I_1}(\A(I_1)) =  \pi_{I_0}(\A(I_1))$ commutes with $\pi_{I_2}(\A(I_2)) =  \pi_{I_0}(\A(I_2))$. 
Moreover, for every $I\in \I$, $\pi_I$ is faithful because $\A(I)$ is a countably decomposable type III factor and hence a simple  C*-algebra. 

If $\pi$ is locally normal then $\pi_I(\A(I))$ is a type ${\rm III}_1$ factor with separable predual for all $I\in \I$.  Moreover it  
follows easily from the definition and additivity of conformal nets that  if $I \subset \bigcup_{\beta} I_\beta$, $I, I_\beta \in \I$, then 
$\pi_I(\A(I)) \subset \bigvee_{\beta} \pi_{I_\beta}(\A(I_{\beta}))$. As a consequence we also have $\pi_I(\A(I)) \subset \pi_{I'}(\A(I'))'$ 
for every $I \in \I$ a property which in principle may fail for representations which are not locally normal. 

A representation $\pi$ on the vacuum Hilbert space $\H$ is said to be localized  in $I_0$ if $\pi_{I'} = \id$.  
In this case, by Haag duality, we have $\pi_I(\A(I)) \subset \A(I)$, for all $I\in \I$ containing $I_0$,   i.e. $\pi_I$ is an endomorphism of $\A(I)$. 
A representation of $\A$ localized in some interval in $\I$ is also called a \emph{localized endomorphism} or 
\emph{DHR endomorphism} of the net $\A$. 

If $\pi$ is any representation of $\A$ on a separable Hilbert space $\H_\pi$ and $I_0$ is any interval in $\I$ then there exists a representation localized in $I_0$ and unitarily equivalent to 
$\pi$.

A locally normal representation $\pi$ is always covariant with respect to the Möbius group in the sense that there exists a unique 
strongly continuous unitary representation $U_\pi$ of the universal covering $\PSI$ such that
\begin{equation}
U_\pi(g)\pi_I(a)U_\pi(g)^* = \pi_{\dot{g}I}(U(\dot{g})aU(\dot{g})^*), \quad g \in \PSI \; a \in \A(I),
\end{equation}
where $\dot{g}$ is the image of $g$ in $\PSL$ under the covering map,  and such that 
$U_\pi(g)\in \bigvee_{I\in\I}\pi_I(\A(I))$, for all $g\in\PSI$, \cf \cite{DFK}. This way, the infinitesimal generator $L_0^\pi$ of the lifting of the rotation subgroup turns out to be well-defined and positive \cite{Wei06}.

As a consequence of covariance if $\H_\pi$ is separable, the minimal index \linebreak
$[ \pi_{I'}(\A(I'))': \pi_I(\A(I))]$ is independent of $I$. Its square root 
is called the statistical dimension of $\pi$, is denoted by $d(\pi)$ and depends only on the unitary equivalence class $[\pi]$ of $\pi$. 
Accordingly $d(\pi)=1$ iff  $\pi_{I}(\A(I)) = \pi_{I'}(\A(I'))'$ for all $I \in \I$ i.e. if the representation satisfies Haag duality. In particular $d(\pi_0)=1$. 
If $\H_{\pi_1}$ and $\H_{\pi_2}$ are separable then $d(\pi_1 \oplus \pi_2) =d(\pi_1) +d(\pi_2)$. If the representation $\pi$ is localized in $I_0\in \I$ then, if $I\in \I$ contains $I_0$ we have  $d(\pi)= [\A(I):\pi_I(\A(I))]^{\frac12}$, i.e. $d(\pi)^2$ is the index of the unital endomorphism 
$\pi_I \in \End (\A(I))$.

The sum of $d(\pi)^2$ over all sectors $[\pi]$ is called the global index of $\A$. The (finite) $\mu$-index of a completely rational net coincides with the global index \cite{KLM} and hence such a net has only finitely many sectors, each with finite statistical dimension. 
Finally, for conformal nets, the split property implies equality of the $\mu$-index and the global index, 
and split together with finiteness of the global index imply complete rationality \cite{LX}.

 Many interesting examples of unitary rational chiral CFT models have been proved to correspond to  
completely rational conformal nets including various loop groups models (e.g. $L\operatorname{SU}(n)$ for every positive integer level 
$k$) and the corresponding cosets models, CFTs associated to the discrete series representations of the Virasoro and super-Virasoro algebras, the moonshine vertex operator algebra $V^\natural$ and lattice models. The class of completely rational conformal nets is moreover closed under tensor product, irreducible extensions, orbifold construction and finite index subnets, see e.g. 
\cite{CKL08,DX06,KL04,KL06,Lon03,Xu00b, Xu05}. 

\section{The universal C*-algebra and its image in locally normal representations}
\label{sec:universal}
 
As mentioned above, the single algebras $\A(I)$, $I\in\I$, are all isomorphic, so we cannot 
expect them to carry any information about the net and its 
representations. In fact, the characteristic properties and the 
representation structure of a specific local conformal net lie in the 
family of inclusions between these local algebras. A successful way of 
capturing this structure is to study a global C*-algebra (see \cite{Fre90,FRS2, GL1} and also \cite{RV} for a generalization of this notion)
with certain universality properties.

\begin{definition}\label{def:CFT-univC}
The \emph{universal C*-algebra} of $\A$ is the unique (up to isomorphism) unital C*-algebra $C^*(\A)$ such that
\begin{itemize}
\item[-] for every $I\in\I$, there are unital embeddings $\iota_I:\A(I)\ra C^*(\A)$, such that $\iota_{I_2|\A(I_1)} = \iota_{I_1}$ if $I_1\subset I_2 $, and all $\iota_I(\A(I))\subset C^*(\A)$ together generate $C^*(\A)$ as C*-algebra;
\item[-] for every representation $\pi$ of $\A$ on $\H_\pi$, there is a unique representation $\hat{\pi}:C^*(\A)\ra B(\H_\pi)$ such that
\[
\pi_I = \hat{\pi}\circ \iota_I,\quad I\in \I.
\]
\end{itemize}
\end{definition}
 
\begin{remark}\label{rem:CFT-univC}
\begin{itemize}
\item[(1)] To show that $C^*(\A)$ in \cite[ Sect. 5]{FRS2} is well-defined, we may construct it as follows: consider the $*$-algebra $*\A$ generated freely by all $\A(I)$, let $\iota_I$ be the natural inclusions, and consider the C*-seminorm
\[
  ||a|| := \sup_{\pi} ||\pi(a)||_{\H_\pi},\quad a\in *\A,
\]
where the supremum is taken over all $^*$-representations $\pi$ of $*\A$ such that \linebreak
$\pi\circ\iota_{I_2|\A(I_1)} = \pi\circ\iota_{I_1}$ if $I_1\subset I_2$ (it is easy to show that the $\sup$ in the above formula  is finite).
Then the algebra $C^*(\A)$ can be taken as the completion of $*\A / \ker(||\cdot||)$. This is the general way of defining a universal C*-algebra given a suitable family of generators and relations \cite[II.8.3.1]{Bla06}. Note that $C^*(\A)$ can be defined in this way for every M\"{o}bius covariant net, yet in the present paper we shall only consider conformal nets.
\item[(2)] For convenience, when no confusion can arise, we shall often identify $\A(I)$ with its image $\iota_I(\A(I))$ in $C^*(\A)$. Moreover we drop the symbol $\hat{\cdot}$ over $\pi$.
\item[(3)] We call a representation $\pi$ of $C^*(\A)$ \emph{locally normal} if $\pi\circ\iota_I$ is normal, for all $I\in\I$, in other words, if it comes from a locally normal representation $(\pi_I)_{I\in\I}$ of the net $\A$. We say that $\pi$ is localized in $I_0$ if $\H_\pi$ is the vacuum Hilbert space 
$\H$ and $\pi \circ \iota_{I_0} = \id $, in other words if the family $(\pi\circ \iota_I )_{I\in \I}$ is a DHR endomorphism of $\A$ localized in $I_0$. 
There is a natural one-to-one correspondence between representations $\pi$ of $C^*(\A)$ localized in a given interval $I\in\I$ and covariant endomorphisms $\rho$ of $C^*(\A)$ localized in $I$, i.e. such that $\rho \circ \iota_{I'} = {\rm id}_{\A(I')}$. Namely, given $\pi$, $\rho$ is the unique endomorphism of $C^*(\A)$ localized in $I$ satisfying $\pi=\pi_0\circ\rho$ and the covariance condition 
\[
\Ad (z(\pi,g))\circ \rho = \alpha_{\dot{g}}\circ\rho\circ \alpha_{\dot{g}}^{-1}, \quad g\in\PSI,
\]
where $z(\pi,g)$ is the canonical cocycle associated to $\pi$ and $\alpha: \PSL {\rm} \mapsto \rm{Aut}(C^*(\A))$ is the automorphism group corresponding to the M\"{o}bius covariance of the net $\A$, see \cite[ Sect. 8]{GL1} (cf. also \cite{CHL12}). 
Clearly, the localized covariant endomorphism corresponding to the vacuum representation $\pi_0$ of $\A$ is the identity automorphism 
$\id : C^*(\A) \to C^*(\A)$. 
If $\rho_1$ and $\rho_2$ are localized covariant endomorphisms of $C^*(\A)$ then $\pi_0\circ \rho_1$ is equivalent to $\pi_0 \circ \rho_2$ 
iff $\rho_1 = \Ad(u)\circ \rho_2$ for some unitary $u\in C^*(\A)$ (i.e. $\rho_1$ and $\rho_2$ are unitarily equivalent  in $C^*(\A)$).

The statistical dimension $d(\rho)$ of the localized endomorphism $\rho$ of  $C^*(\A)$  is defined to be the statistical dimension $d(\pi_0\circ \rho)$ of the localized representation $\pi_0 \circ \rho$. Note that if $\pi$ is any locally normal representation of $C^*(\A)$ and $\rho$ is a covariant localized endomorphism of  $C^*(\A)$ (not necessarily corresponding to $\pi$) then the representation $\pi \circ \rho$ is locally normal and 
the equivalence class $[\pi \circ \rho]$ depends on $[\pi]$ and  $[\rho]$ only.
\item[(4)] If $\rho_1$ and $\rho_2$ are covariant endomorphisms of $C^*(\A)$ with finite statistical dimension and localized in the same interval 
$I_0 \in \I$ then the composition $\rho_1 \rho_2$ is a covariant endomorphism localized in $I_0$ with statistical dimension 
$d(\rho_1)d(\rho_2) < + \infty$. The equivalence class $[\pi_0 \circ \rho_1 \rho_2]$ depends only on
 $[\pi_0 \circ \rho_1 ]$ and $[\pi_0 \circ \rho_2]$.  As a consequence, with the operations  
 $$[\pi_0 \circ \rho_1 ] [\pi_0 \circ \rho_2]  := [\pi_0 \circ \rho_1 \rho_2] ,\quad 
 [\pi_0 \circ \rho_1] + [\pi_0 \circ \rho_2]  := [\pi_0 \circ \rho_1 \oplus \pi_0 \circ \rho_2]$$ 
 the set of equivalence classes of locally normal representations of $\A$ with finite statistical dimension becomes a unital  semiring 
 (without $0$)  $\Ring$, which is commutative because of the DHR braiding \cite{FRS2,GL2,Reh} and is equipped with a dimension function
  $d([\pi_0 \circ \rho])  := d(\rho)$ and a conjugation $\overline{[\pi_0 \circ \rho]}  := [\pi_0 \circ \bar{\rho}]$.  Here 
  $\bar{\rho}$ is the localized covariant endomorphism conjugate to $\rho$ (see  \cite[Subsect. 2.3.]{GL2} and \cite{LR97}). 
  It is defined up to unitary equivalence in 
 $C^*(\A)$ and satisfies $[\pi_0\circ \bar{\bar{\rho}}]=[\pi_0 \circ \rho]$, namely it is involutive on the equivalence classes. If $\pi_0 \circ \rho$ is irreducible then, up to unitary equivalence in $C^*(\A)$, $\bar{\rho}$ is determined by the irreducibility  of $\pi_0 \circ \bar{\rho}$ and the requirement that $\pi_0$ is equivalent to a subrepresentation of $\pi_0 \circ \bar{\rho}\rho$. If this is the case, the multiplicity of $\pi_0$ as a subrepresentation of 
 $\pi_0 \circ \bar{\rho}\rho$ is $1$.   If $\pi_0 \circ \rho$ is reducible and equivalent to $\pi_0 \circ \rho_1 \oplus \pi_0 \circ \rho_2$ 
 then $\pi_0 \circ \bar{\rho}$ is equivalent to $\pi_0 \circ \bar{\rho_1} \oplus \pi_0 \circ \bar{\rho_2}$. $d(\rho)=d(\bar{\rho})$ for any localized covariant endomorphism  $\rho$ of $C^*(\A)$ with finite statistical dimension.   We call $\Ring$ the \emph{fusion semiring} of $\A$. 
  
Let $\A$ be conformal on $S^1$ with finitely many sectors with finite statistical dimension, $s_1,s_2,\dots s_n$. This is the case if e.g. 
$\A$ is completely rational. Suppose that $s_1=[\pi_0]$ and define $\bar{\cdot}: \{1,2,\dots n \} \to \{1,2, \dots n \}$ by 
$s_{\bar{j}}  := \bar{s}_j$.  
Addition and multiplication in $\Ring$ are related by the so-called \emph{fusion rules}, namely there are coefficients $N_{i j}^k\in\NN$ such that
\begin{equation}\label{eq:prelim-fusion1}
s_i s_j = \sum_{k=1}^n N_{i j}^k s_k.
\end{equation}
From the properties of the conjugation of endomorphisms and commutativity we have 
\begin{equation}\label{eq:prelim-fusion2}
N_{i j}^1=\delta_{i, \bar{j}} ,\quad  N_{i j}^k=N_{j i}^k ,\quad i,j,k =1,\dots,n .
\end{equation}
 As a consequence we also have 
 \begin{equation}\label{eq:prelim-fusion3}
    N_{i j}^{\bar{k}} = N_{j k}^{\bar{i}} = N_{\bar{k} \bar{j}}^{i} ,\quad i,j,k =1,\dots,n
 \end{equation}
 see e.g. \cite[Lemma 2.2.]{Yam99}, cf. also \cite[Sect. 5]{Reh}. 
 
 Fix an interval $I_0 \in \I$. Then the map $[\rho] \mapsto [\pi_0 \circ \rho]$ is a well-defined one to one correspondence 
 between the set 
 of unitary equivalence classes of covariant endomorphisms of $C^*(\A)$ with finite statistical dimension and localized in $I_0$ and the set of equivalence classes of locally normal representations of $C^*(\A)$ with finite statistical dimension. 
Accordingly we may as well view $\Ring$ as the semiring of unitary equivalence classes of covariant endomorphisms of $C^*(\A)$ with finite statistical dimension and localized in any fixed interval $I_0\in \I$.  Then we have $[\rho_1] [\rho_2] = [\rho_1 \rho_2]$, $\overline{[\rho]} = [\bar{\rho}]$, 
$d([\rho]) = d(\rho)$ and $[\rho_1] + [\rho_2] = [\rho]$ iff  $\pi_0 \circ \rho_1 \oplus \pi_0 \circ \rho_2$ is equivalent to 
$\pi_0 \circ \rho$. 

In all computed cases the DHR fusion rules of a completely rational conformal net coincide with the Verlinde fusion rules of the 
the corresponding CFT model, see e.g. \cite[Subsect. 3.2.]{KL05}.

\item[(5)] In the following we shall consider the subalgebra $\Alg(\A)$ of $C^*(\A)$ generated by the local subalgebras 
$\A(I)$, $I\in \I$. Clearly, $\Alg(\A)$ is a unital norm dense $*$-subalgebra of  $C^*(\A)$. 

\item[(6)] Let $I_1, I_2 \in \I$ be such that $I_1 \subset I_2'$. If $I_1 \neq I_2'$ then there is an interval $I_0\in\I$ containing $I_1 \cup I_2$. Hence 
$\iota_{I_1}(\A(I_1)) =  \iota_{I_0}(\A(I_1))$ commutes with $\iota_{I_2}(\A(I_2)) =  \iota_{I_0}(\A(I_2))$ so that we have a weak 
version of locality inside $C^*(\A)$. However, it seems to be unknown whether or not $\iota_{I}(\A(I))$ commutes with $\iota_{I'}(\A(I'))$ 
for $I \in \I$. Accordingly, the strong version of locality may fail in $C^*(\A)$.  On the other hand, if $\pi$ is any locally normal representation of $\A$ then $\pi (\iota_{I}(\A(I))) \subset \pi (\iota_{I'}(\A(I')))'$ for all $I \in \I$ by additivity, cf. Section \ref{sec:prelim}. 

\end{itemize}
\end{remark}

As an abstract C*-algebra  $C^*(\A)$ depends in general on the conformal net $\A$ on $S^1$.  For example if $\A_1$ (resp. $\A_2$) 
is a completely rational conformal net with $n_1$ (resp. $n_2$) sectors then $C^*(\A_1)$ (resp. $C^*(\A_2)$) is a  
C*-algebra with 
$n_1$ (resp. $n_2$) equivalence classes of irreducible representations on a separable Hilbert space. Accordingly, if $n_1 \neq n_2$, 
 $C^*(\A_1)$ and $C^*(\A_2)$ cannot be isomorphic. 
 
 However, as shown by the following proposition the universal  C*-algebra  $C^*(\A)$ has various properties which do not depend on the choice of the conformal net $\A$ on $S^1$.  
      
\begin{proposition}\label{prop:U-propsCA}
Let $\A$ be a conformal net on $S^1$. Then the universal C*-algebra $C^*(\A)$ is unital, properly infinite, has stable rank $\infty$ and it is generated by projections. It is neither separable, nor simple, nor exact, nor purely infinite.
\end{proposition}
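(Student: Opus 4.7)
The plan is to verify each property in turn, exploiting the structure of the local algebras $\A(I)$ (all type III$_1$ factors on a separable Hilbert space) together with the universal property of $C^*(\A)$.

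The first cluster of properties --- unital, generated by projections, properly infinite, and stable rank $\infty$ --- I would deduce from the corresponding properties of any single local algebra $\A(I)$ via the unital embedding $\iota_I$. Unitality is built into Definition \ref{def:CFT-univC}. Since $\A(I)$ is a von Neumann algebra it is norm-generated by its projections, so $C^*(\A)$, being generated by $\bigcup_I \iota_I(\A(I))$, is generated by projections. Proper infiniteness follows from $\A(I)$ being a type III$_1$ factor: write $1_{\A(I)} = e_1 + e_2$ with $e_1, e_2$ mutually orthogonal projections each Murray--von Neumann equivalent to $1_{\A(I)}$ inside $\A(I)$, and transport the relation via $\iota_I$. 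Stable rank $\infty$ is then a consequence of proper infiniteness of the unit by a standard result of Rieffel.

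For the ``negative'' structural properties, non-separability and non-exactness follow from the fact that $\iota_I(\A(I))$ sits as a C*-subalgebra of $C^*(\A)$. The type III$_1$ factor $\A(I)$ on a separable Hilbert space is not norm-separable (one can exhibit an uncountable family of mutually orthogonal nonzero projections via spectral calculus of a self-adjoint element with continuous spectrum) and is non-exact by Kirchberg's theorem (a von Neumann algebra on a separable Hilbert space is exact as a C*-algebra if and only if it is of type I). Both separability and exactness pass to C*-subalgebras, so their negations are inherited by $C^*(\A)$.

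Non-simplicity I would obtain by producing a representation of $C^*(\A)$ that is not locally normal, which forces the kernel of the quotient map $C^*(\A) \to C^*_{\rm ln}(\A)$ to be a proper nontrivial ideal. A concrete construction: pick any non-normal state $\varphi$ on $\A(I)$ (such states exist because $\A(I)$ is an infinite-dimensional type III factor), extend $\varphi$ to a state $\tilde{\varphi}$ on $C^*(\A)$ via Hahn--Banach along $\iota_I$, and consider the GNS representation $\pi_{\tilde{\varphi}}$. Its restriction to $\iota_I(\A(I))$ contains the non-normal GNS of $\varphi$, hence $\pi_{\tilde{\varphi}}$ fails to be locally normal. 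The property I expect to be the main obstacle is non-pure-infiniteness. Since pure infiniteness passes to quotients, the natural strategy is to exhibit a quotient of $C^*(\A)$ that fails it, the most natural candidate being a quotient coming from non-locally-normal GNS representations (or the ideal $\ker(C^*(\A)\to C^*_{\rm ln}(\A))$ itself); inside such a quotient one would look for either a nonzero (quasi-)tracial functional or a nontrivial commutative unital hereditary subalgebra, both of which obstruct pure infiniteness. The real work will lie in controlling the GNS picture arising from non-normal states on the local algebras well enough to detect such structure.
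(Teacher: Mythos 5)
Your treatment of the positive properties (unital, generated by projections, properly infinite via the type III structure of $\A(I)$, stable rank $\infty$ from proper infiniteness) and of non-separability coincides with the paper's. Two caveats on non-exactness: the ``Kirchberg theorem'' you invoke is false as stated, since $B(\H)$ is a type I von Neumann algebra on a separable Hilbert space that is \emph{not} exact. The implication you actually need does hold, and the paper gets it cleanly: a type III factor contains a copy of a type I$_\infty$ factor, type I$_\infty$ factors are not exact, and exactness passes to C*-subalgebras. That is a repairable citation issue. The genuine gaps are in non-simplicity and non-pure-infiniteness.

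For non-simplicity your argument is a non sequitur: producing a representation of $C^*(\A)$ that is not locally normal does \emph{not} force $\ker\bigl(C^*(\A)\to C^*_{\locn}(\A)\bigr)$ to be nonzero. That kernel consists of elements annihilated by \emph{every} locally normal representation, and nothing you construct rules out its being $\{0\}$; indeed $C^*_{\locn}(\A)$ itself contains the type III factors $\iota_I(\A(I))$ and therefore admits non-locally-normal representations via Hahn--Banach extensions of non-normal states, so the existence of such representations carries no information about that kernel. For non-pure-infiniteness you offer only a plan (find a trace or a commutative hereditary subalgebra in some quotient) and explicitly defer the ``real work.'' The missing idea, which the paper uses for both properties, is the concrete fact that $\K(\H)\subset\pi_0(C^*(\A))$ (Corollary \ref{corCompacts}); this is not formal -- it rests on the heat-kernel result $\rme^{-tL_0^\pi}\in\pi(\Alg(\A))$ (Theorem \ref{th:univ-heat-inCA}) combined with the one-dimensionality of the vacuum eigenspace of $L_0$, which yields a finite-rank spectral projection and hence all of $\K(\H)$. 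Granting that, $\pi_0^{-1}(\K(\H))$ is a nonzero closed ideal, proper because $\K(\H)$ meets each type III factor $\A(I)=\pi_0(\iota_I(\A(I)))$ trivially, so $C^*(\A)$ is not simple; and a rank-one projection of $\K(\H)$ is a finite projection in the quotient $\pi_0(C^*(\A))$, which is therefore not purely infinite, whence neither is $C^*(\A)$ since pure infiniteness passes to quotients. Without some substitute for Corollary \ref{corCompacts}, neither of these two negative properties is established by your proposal.
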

\begin{proof} 
Concerning all the relevant definitions and facts about these C*-algebraic properties we have to refer to \cite{Bla06} in order to keep our discussion tight. Moreover, we will use Corollary \ref{corCompacts} which is stated and proved (of course, independently of this proposition)
at the end of this section. 

$C^*(\A)$ is unital by definition. It is nonseparable and properly infinite because it contains the closed unital subalgebras
$\iota_I(\A(I))$, $I \in \I$ which are nonseparable and properly infinite  C*-algebras because they are isomorphic to the type III factors 
$\A(I)$. According to \cite[V.3.1.11]{Bla06}, a properly infinite C*-algebra has stable rank $\infty$. Moreover, $C^*(\A)$ is generated by projections because it is generated by the $W^*$-algebras $\iota_I(\A(I))$, which of course are generated by projections. 

By Corollary \ref{corCompacts}, $\pi_0(C^*(\A))$ contains $\K(\H)$, so that $C^*(\A)$ contains the nonzero closed ideal 
$\pi_0^{-1}(\K(\H))$.  This ideal is not equal to  $C^*(\A)$ e.g. because $\K(\H)$ has a trivial intersection with any of the type III factors 
$\A(I)=\pi_0(\iota(\A(I)))$. Hence $C^*(\A)$ is not simple. As another consequence,  the finite projections in $\K(\H)$ are finite projections in $\pi_0(C^*(\A))$ so that the latter is not purely infinite, \cite[V.2.2.24]{Bla06}. Hence, by \cite[V.2.2.22]{Bla06}, $C^*(\A)$ is not purely infinite. 

Finally, suppose $C^*(\A)$ were exact. Let $I$ be an interval in $\I$ and let $\A(I)$ be the corresponding type III factor. Then 
$\A(I)$, being a purely infinite von Neumann algebra, contains a countably decomposable infinite-dimensional type I factor $F$, see 
\cite[III.1.5.6 (ii)]{Bla06} (we are not assuming the split property). Since exactness is hereditary to subalgebras, $\iota_I(F)$ and hence
$F$ would be exact. But type I$_\infty$ factors are not exact \cite[II.9.6.6]{Bla06}, a contradiction. So $C^*(\A)$ is not exact.
\end{proof}
\begin{remark}\label{remarkCenter} For many interesting conformal nets $\A$ on $S^1$ one can use \cite[Proposition 5.1.]{FRS2} to show that $C^*(\A)$ has nontrivial center.
\end{remark}

Another  C*-algebra which is useful in the study of a conformal net $\A$ on $S^1$ is obtained as the quasi-local  
C*-algebra 
 $C^*(\A|_\R)$ of the restriction of $\A$ to $\R \simeq S^1\setminus \{-1\}$. More precisely let $\I_\R=\{I \in \I: -1 \notin \overline{I} \}$. Then $\I_\R$ is directed under inclusion and $C^*(\A|_\R)$ is defined as the norm closure of the unital $*$-algebra $\bigcup_{I \in \I_\R}\A(I)$. As shown by the following proposition the quasi-local  C*-algebras $C^*(\A|_\R)$ share many properties with the universal  C*-algebras $C^*(\A)$ but there are also various important differences. 
\begin{proposition}\label{PropQuasilocal}
Let $\A$ be a conformal net on $S^1$. Then the quasi-local  C*-algebra $C^*(\A|_\R)$ is a simple unital, properly infinite and purely infinite  C*-algebra, which has stable rank $\infty$, real rank $0$, it is generated by projections and has trivial $K$-theory. It is neither separable nor exact. If $\A_1$ and $\A_2$ are conformal nets on $S^1$ with the split property then $C^*(\A_1 |_\R)$ is isomorphic to 
$C^*(\A_2 |_\R)$. 
\end{proposition}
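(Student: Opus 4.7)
The plan is to obtain each property of $C^*(\A|_\R)=\overline{\bigcup_{I\in\I_\R}\A(I)}^{\|\cdot\|}$ by reducing to the already-established properties of the local type III$_1$ factors $\A(I)$ and standard behavior of C*-inductive limits over the directed set $\I_\R$. The properties \emph{unital}, \emph{properly infinite}, \emph{not separable}, \emph{not exact}, \emph{generated by projections}, and \emph{stable rank $\infty$} follow by essentially the same arguments used in the proof of Proposition \ref{prop:U-propsCA}: each $\A(I)$ is a nonseparable, properly infinite type III$_1$ factor containing a countably decomposable type I$_\infty$ subfactor, and each of these C*-algebraic properties passes to the unital C*-inductive limit (exactness by heredity, stable rank $\infty$ from proper infiniteness, etc.).

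For \emph{simplicity}, observe that each $\A(I)$, being a countably decomposable type III factor, is a simple unital C*-algebra, and the inductive system has injective unital connecting maps, so any closed ideal $J$ of the limit intersects each $\A(I)$ in either $\{0\}$ or all of $\A(I)$; the latter would contain the unit, forcing $J$ to be the whole algebra. \emph{Pure infiniteness} follows by approximating any nonzero positive element of $C^*(\A|_\R)$ by an element in some $\A(I)$ and extracting an infinite projection from the corresponding hereditary subalgebra of $\A(I)$, which remains infinite in the unital limit. \emph{Real rank $0$} and \emph{trivial $K$-theory} follow from continuity of these invariants under inductive limits, since each $\A(I)$ is a von Neumann algebra (hence of real rank $0$) and has $K_0=K_1=0$ as a type III factor.

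The main content is the last assertion. Assuming the split property, choose a nested sequence $\overline{I}_n^-\subset I_n^+\subset\overline{I}_n^+\subset I_{n+1}^-\subset\cdots$ of intervals in $\I_\R$ exhausting $\R$, and use the split property to obtain type I factors $F_n$ with $\A(I_n^-)\subset F_n\subset \A(I_n^+)$. Then $F_n\subset\A(I_n^+)\subset\A(I_{n+1}^-)\subset F_{n+1}$, and $C^*(\A|_\R)=\overline{\bigcup_n F_n}^{\|\cdot\|}$. Each $F_n$ is a countably decomposable type I$_\infty$ factor (it contains the type III$_1$ factor $\A(I_n^-)$), hence isomorphic to $B(\K_n)$ for a separable infinite-dimensional Hilbert space $\K_n$. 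Selecting an auxiliary interval $K\subset I_{n+1}^-\setminus \overline{I}_n^+$ and applying locality yields $\A(K)\subset F_n'\cap F_{n+1}$, so $F_n'\cap F_{n+1}$ is infinite-dimensional and hence itself a type I$_\infty$ factor; thus $F_{n+1}\cong F_n\bar\otimes(F_n'\cap F_{n+1})$ and each inclusion $F_n\hookrightarrow F_{n+1}$ is a unital embedding of type I$_\infty$ factors of infinite multiplicity.

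The isomorphism class of the C*-inductive limit of such a sequence is then determined purely by the abstract data: each $F_n$ is $B$ on a separable infinite-dimensional Hilbert space, and each inclusion has infinite multiplicity. Given two such systems $(F_n,F_n\hookrightarrow F_{n+1})$ and $(F'_n,F'_n\hookrightarrow F'_{n+1})$ arising from two nets $\A_1,\A_2$, construct inductively a sequence of unital $*$-isomorphisms $\psi_n:F_n\to F'_n$ intertwining the inclusions, using that any two unital embeddings of $B(\H)$ into a type I$_\infty$ factor of infinite multiplicity are unitarily equivalent, to absorb the resulting unitaries at each stage. The main technical obstacle is ensuring coherence of the $\psi_n$ across all levels so that they assemble into a genuine $*$-isomorphism of the inductive limits; this is handled by a back-and-forth intertwining argument of the type familiar from the classification of AF-like inductive limit C*-algebras.
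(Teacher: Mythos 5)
Your proposal is correct and follows essentially the same route as the paper: the generic properties are obtained from permanence of simplicity, pure infiniteness, real rank $0$ and $K$-theory under C*-inductive limits of the simple, purely infinite type III factors $\A(I)$, and the isomorphism statement is reduced, via the split property, to an increasing tower of type I$_\infty$ factors $F_n$ with infinite-dimensional type I relative commutants $F_n'\cap F_{n+1}$. The only (cosmetic) difference is that you intertwine the two towers directly by compatible isomorphisms, whereas the paper identifies each of $C^*(\A_1|_\R)$ and $C^*(\A_2|_\R)$ with the fixed model inductive limit $B(\H)\to B(\H)\bar{\otimes}B(\H)\to\cdots$ with maps $x\mapsto x\otimes\unit$.
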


\begin{proof}
The proof follows from the fact that $C^*(\A|_{\R})$ is the direct limit of countably decomposable type III factors, which are simple, nonseparable properly infinite and purely infinite   C*-algebras \cite[III.1.7.11\&p.431]{Bla06}.  Moreover, type III factors are not exact being properly infinite von Neumann algebras (\cf the proof of Proposition \ref{prop:U-propsCA} above). The K-groups of a direct limit are the direct limit of the K-groups, which are 0 in the present case \cite[V.1.1.16]{Bla06}. The pure infiniteness follows by the same reasoning, using \cite[V.2.2.26]{Bla06}. Real rank is stable under direct limits, and every von Neumann algebra has real rank 0, \cite[V.3.2.10-11]{Bla06}. Now, let $\A_1$ be a conformal net on $S^1$ with the split property. Then there is an increasing sequence  of countably decomposable infinite-dimensional type I factors $F_1 \subset F_2 \subset \dots \subset F_n \subset \dots$ such that $F'_n \cap F_{n+1}$ is infinite-dimensional for all positive integers $n$ and such that for every $I\in \I_\R$ there exists a positive integer $m$ such that  $\A_1(I) \subset F_m$. Accordingly 
$\bigcup_{I \in \I_\R}\A_1(I) = \bigcup_{n\in \N}F_n$ and hence $C^*(\A_1 |_\R)$ is equal to the norm closure 
$B_1$ of $\bigcup_{n\in \N}F_n$. 
If $\H$ is a separable Hilbert space then the inclusion map $\phi_1 : B(\H) \ra B(\H) \bar{\otimes} B(\H) $ defined by 
$\phi_1(x)=x\otimes \unit$ gives rise to a directed system 
$B(\H) \overset{\phi_1}{\ra}  B(\H) \bar{\otimes} B(\H) \overset{\phi_2}{\ra} B(\H) \bar{\otimes} B(\H)\bar{\otimes} B(\H) \dots$ 
and it is fairly easy to see that the corresponding  C*- inductive limit $B$ is isomorphic to $B_1$ and hence to 
$C^*(\A_1 |_\R)$. If $\A_2$ is another conformal net on $S^1$ with the split property then in the same way we see that 
$C^*(\A_2 |_\R)$ is isomorphic to $B$ and consequently it is isomorphic to $C^*(\A_1 |_\R)$. 
Note that incidentally we have shown that the above  C*-inductive limit $B$ of infinite-dimensional countably decomposable type I factors has all the properties in the proposition. In particular it is a simple and purely infinite  
C*-algebra. 
\end{proof}

The properties in Proposition \ref{prop:U-propsCA} are far from being enough to determine the possible isomorphism classes of the 
universal  C*-algebras of  conformal nets even if we restrict to the very special (but important) class of completely rational nets. 
However, as we shall see in the rest of this section we can get much more information for their images $\pi(C^*(\A))$ in locally normal representations $\pi$, especially in the cases where the representations have finite statistical dimension.
We begin with an abstract result on intermediate $*$-subalgebras of inclusions of von Neumann algebras having finite index in the sense of Pimsner and Popa \cite{PiPo86}, see  \cite[Definition 1.1.1]{Popa95}, which will play a crucial role in our analysis. 
 
\begin{proposition}\label{intermediatesubalgebras} 
Let  $N \subset M$ be a finite index inclusion of countably decomposable von Neumann algebras with $N$ a type {\rm III} factor. Then every $*$-subalgebra $B_0$ of $M$ containing $N$ is $\sigma$-weakly closed. 
\end{proposition}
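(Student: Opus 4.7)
The plan is to exploit the $N$-$N$ bimodule structure of $M$, which is sharply constrained by the finite-index assumption together with the type III factor property of $N$.

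First I would observe that $B_0$ is automatically an $N$-$N$ sub-bimodule of $M$: since $N \subset B_0$ and $B_0$ is a $*$-algebra, one has $n_1 b n_2 \in B_0$ for all $n_1, n_2 \in N$ and $b \in B_0$. Hence the statement reduces to showing that every $N$-$N$ sub-bimodule of $M$ is $\sigma$-weakly closed.

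Next I would invoke the standard bimodule decomposition from finite-index subfactor theory. The Hilbert-space correspondence ${}_N L^2(M)_N$ splits as $\bigoplus_\alpha H_\alpha \otimes K_\alpha$ with only finitely many isomorphism classes $\alpha$ of irreducible $N$-$N$ correspondences occurring and each multiplicity Hilbert space $H_\alpha$ finite-dimensional, both forced by the fact that the quantum dimension of ${}_N L^2(M)_N$ equals $[M:N] < \infty$. One then checks that each isotypic projection restricts to a bounded $N$-bilinear map on $M$ (this uses the conditional expectation $E\colon M \to N$ and the finite Pimsner--Popa basis), producing the analogous decomposition $M = \bigoplus_\alpha H_\alpha \otimes M_\alpha$ of $M$ itself as an $N$-$N$ bimodule, with each $M_\alpha \subset M$ irreducible in the class of $K_\alpha$.

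Because $N$ is a type III factor, and hence properly infinite, it is algebraically simple: any nonzero $n \in N$ admits a representation $1 = \sum_i a_i n b_i$ with $a_i, b_i \in N$. This forces each irreducible bimodule $M_\alpha$ to have no proper nonzero $N$-$N$ sub-bimodule even algebraically, and so by a Schur-type argument every $N$-$N$ sub-bimodule of $H_\alpha \otimes M_\alpha$ has the form $V_\alpha \otimes M_\alpha$ for a subspace $V_\alpha \subset H_\alpha$. Since $H_\alpha$ is finite-dimensional, $V_\alpha$ is automatically $\sigma$-weakly closed, and the finite direct sum $\bigoplus_\alpha V_\alpha \otimes M_\alpha$ is closed in $M$. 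Applying this to the sub-bimodule $B_0$ finishes the proof.

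The main difficulty is the middle step: rigorously transferring the decomposition from $L^2(M)$ to $M$ itself, by producing bounded $N$-bilinear projections on $M$ onto each isotypic component and confirming they take values in $M$. Once this decomposition is in hand, the Schur-type classification and the finite-dimensionality of each $H_\alpha$ close the argument rapidly; it is here that the type III hypothesis on $N$ is genuinely needed, to ensure that algebraic and topological irreducibility of the $M_\alpha$ coincide.
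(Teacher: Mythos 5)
Your opening reduction is correct ($B_0$ is indeed an $N$-$N$ sub-bimodule, and closedness of all such sub-bimodules would suffice), and the strategy of ``finite multiplicity plus algebraic irreducibility of the constituents'' would close the argument if it could be carried out. But the two steps that carry all the weight are left open, and the one place where you claim the type III hypothesis does the work does not follow from what you invoke. The transfer of the correspondence decomposition of ${}_N L^2(M)_N$ to an algebraic direct-sum decomposition $M=\bigoplus_\alpha H_\alpha\otimes M_\alpha$ is, as you acknowledge, the main difficulty, and it is not a formality: producing $N$-bilinear idempotents on $M$ onto the isotypic components already requires the finite (Pimsner--Popa/Izumi) basis machinery that proves the proposition more directly. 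More seriously, the assertion that algebraic simplicity of $N$ forces each $M_\alpha$ to have no proper nonzero \emph{algebraic} sub-bimodule is unjustified: the piece attached to an irreducible sector $\rho$ of $N$ is (a multiple of) $N$ with one of the two actions twisted by $\rho$, so an algebraic sub-bimodule of it is a one-sided ideal of $N$ that is a module over the proper subalgebra $\rho(N)$ on the other side --- it is \emph{not} a two-sided ideal of $N$, and simplicity of $N$ says nothing about it. Proper non-closed one-sided ideals certainly exist in a countably decomposable type III factor (e.g.\ $\bigcup_n p_nN$ for projections $p_n\nearrow \unit$ with $p_n\neq\unit$), so ruling them out as sub-bimodules needs a genuine argument exploiting the finite index of $\rho$. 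The same gap reappears in the Schur step: you need the algebraic endomorphism ring of $M_\alpha$ to be $\C$, which does not follow from topological irreducibility of the correspondence $K_\alpha$.

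For comparison, the paper's proof avoids the decomposition into irreducibles entirely and treats the norm closure $B$ of $B_0$ as a singly generated right $N$-module: simplicity of $N$ as a C*-algebra lets one apply Izumi's quasi-basis theorem \cite{Izumi02} to get $w_1,\dots,w_n\in B$ with $b=\sum_i w_iE(w_i^*b)$ for all $b\in B$, whence normality of $E$ makes $B$ $\sigma$-weakly closed; Popa's basis theorem \cite{Popa95} upgrades this to $B=wN$ for a single $w\in B$; and then $B_0=wN_0$ for a norm-dense right ideal $N_0=E(w^*B_0)$ of $N$, which must contain an invertible element and hence equal $N$. That last step is precisely where the hypothesis $N\subset B_0$ is used beyond the bimodule property, and it is exactly the ``non-closed submodule'' issue that your sketch elides. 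Your approach aims at the strictly stronger statement that \emph{every} algebraic $N$-$N$ sub-bimodule of $M$ is $\sigma$-weakly closed; that may well be true, but as written the proposal does not establish it.
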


\begin{proof}
Let $E: M\ra N$ be a faithful normal conditional expectation with finite index and let $B$ be the norm closure of $B_0$. 
Then the restriction of $E$ to $B$ gives a conditional expectation of the  C*-algebra $B$ onto $N$ with finite index in the sense of 
\cite[Def. 2.1]{Izumi02}. Now, by \cite[III.1.7.11]{Bla06}, $N$ is a simple  C*-algebra. Hence, by 
\cite[Corollary 3.4]{Izumi02} 
there exist elements $w_1,\dots w_n$ in $B$ such that $$b = \sum_{i=1}^n w_i E(w_i^*b)$$ for all $b \in B$. Now, if $b$ is in the $\sigma$-weak closure of $B$ in $M$ then, recalling that $E$ is normal we find again $$b = \sum_{i=1}^n w_i E(w_i^*b) \in B$$ and hence $B$ is $\sigma$-weakly closed. By \cite[Theorem 1.1.6 b)]{Popa95} there exists $w\in B$ such that $E(w^*w)= \unit$ and $b=wE(w^*b)$ for all 
$b\in B$ so that $B=wN$ . Now let $N_0= \{n \in N: w n \in B_0 \}$. Then $N_0$ is a right ideal of $N$ such that $wN_0 = B_0$. Hence 
$N_0 = E(w^*wN_0) = E(w^*B_0)$ is norm dense in $E(w^*B)=N$. It follows that $N_0$ must contain some element from the open 
set of invertibles in $N$ and hence, being a right ideal,  it must coincide with $N$.   Accordingly $B_0=B$ and the conclusion follows. 
\end{proof}  

We now come back to the universal  C*-algebras of conformal nets on $S^1$.  

\begin{proposition}\label{typeIgenerate} 
Let $\A$ be a conformal net on $S^1$ with the split property and let $F$ be a type {\rm I} factor such that 
$\A(I_1) \subset F \subset  \A(I_2)$ for a given pair $I_1, I_2 \in \I$ with $\overline{I_1} \subset I_2$.  Then 
$\pi(C^*(\A))''= \pi_{I_2}(F)\vee \left(\pi_{I_2}(F)'\cap \pi(\Alg(\A)) \right)''$ for every locally normal representation $\pi$ of $\A$.    
 \end{proposition}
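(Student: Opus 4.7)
The plan is to write $R := \pi_{I_2}(F)$ and $A_0 := \pi(\Alg(\A))$, so that the desired identity becomes $A_0'' = R \vee (R' \cap A_0)''$ (using that $\Alg(\A)$ is norm-dense in $C^*(\A)$, so $\pi(C^*(\A))'' = A_0''$). The ``$\supseteq$'' direction is immediate since $R \subset A_0$ and $R' \cap A_0 \subset A_0 \subset A_0''$, with $A_0''$ weakly closed. I would then reduce the nontrivial ``$\subseteq$'' direction to the pointwise inclusions $\pi_I(\A(I)) \subset R \vee (R' \cap A_0)''$ for all $I \in \I$, since the join of these $\pi_I(\A(I))$ over all $I$ gives back $A_0''$.

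The first main step is to handle the ``small'' case: when $I \cup I_2 \neq S^1$, pick an interval $J \in \I$ containing $I \cup I_2$. Then $F$ sits as a type I subfactor inside the type III factor $\A(J)$, and the standard tensor-splitting for type I subfactors (via $\H = K_1 \otimes K_2$ with $F = B(K_1) \otimes \C$) gives $\A(J) = F \vee (F' \cap \A(J))$. I would apply the normal homomorphism $\pi_J$, which preserves joins of von Neumann subalgebras, to deduce $\pi_J(\A(J)) = R \vee \pi_J(F' \cap \A(J))$. Since $F' \cap \A(J) \subset \A(J) \subset \Alg(\A)$ and its $\pi_J$-image commutes with $R$, the second factor sits inside $R' \cap A_0$, giving $\pi_I(\A(I)) \subset \pi_J(\A(J)) \subset R \vee (R' \cap A_0)''$.

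The remaining case $I \cup I_2 = S^1$ is equivalent to $\overline{I_2'} \subset I$, so $I$ contains both boundary points $p, q$ of $I_2$ in its interior. The plan is to cover $I = I_a \cup I_b$ by two subintervals, one missing $q$ and the other missing $p$: pick $p', q'$ in the open arc $I_2'$ lying strictly between $p$ and $q$ with $p' \neq q'$, and let $I_a$ be the component of $I \setminus \{q'\}$ containing $p$, and $I_b$ the component of $I \setminus \{p'\}$ containing $q$. Then $I_a \cup I_b = I$ while $\overline{I_2'}$ is contained in neither $I_a$ nor $I_b$, hence $I_a \cup I_2 \neq S^1$ and $I_b \cup I_2 \neq S^1$. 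Additivity in the locally normal representation $\pi$ then gives $\pi_I(\A(I)) \subset \pi_{I_a}(\A(I_a)) \vee \pi_{I_b}(\A(I_b))$, and both summands are handled by the previous step.

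The main point to watch is the interplay between the tensor-splitting $\A(J) = F \vee (F' \cap \A(J))$ and the passage through $\pi_J$, for which local normality of $\pi$ is essential; the combinatorial topological step for ``large'' intervals is essentially automatic. The split property itself enters only through the existence of $F$, after which the argument depends only on $F$ being a type I subfactor of $\A(I_2)$, on isotony and additivity of $\A$, and on local normality of $\pi$.
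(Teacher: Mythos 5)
Your strategy is essentially the paper's own: the engine of both arguments is the splitting $N=M\vee(M'\cap N)$ for a type I factor $M$ inside a von Neumann algebra $N$, combined with additivity and local normality of $\pi$ to generate $\pi(C^*(\A))''$ from the intervals so treated. You apply the splitting to $F\subset\A(J)$ in the defining representation and push it through the normal map $\pi_J$; the paper applies it directly to $\pi_I(F)\subset\pi_I(\A(I))$ for every $I\supset I_2$ via a commutant computation on $\H_\pi\simeq\H\otimes\tilde{\H}$, and then asserts that these $\pi_I(\A(I))$ generate $\pi(C^*(\A))''$ by additivity. The difference is cosmetic. However, your case division has a genuine hole.

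In your ``small'' case you assume that $I\cup I_2\neq S^1$ guarantees the existence of $J\in\I$ with $I\cup I_2\subset J$. Such a $J$ exists if and only if $\overline{I}\cup\overline{I_2}\neq S^1$, which is strictly stronger. For example $I=I_2'$ satisfies $I\cup I_2=S^1\setminus\partial I_2\neq S^1$, yet no proper open interval contains $I\cup I_2$; the same happens for any $I$ with $I_2'\subset\overline{I}$ but $\overline{I_2'}\not\subset I$. These intervals are not reached by your ``large'' case either, since that covering construction needs both boundary points of $I_2$ to lie in $I$. The repair is routine: base the dichotomy on whether $\overline{I}\cup\overline{I_2}=S^1$, and in the remaining cases cover $I$ by subintervals each of whose closures misses a nonempty open subarc of $I_2'$ (e.g.\ cover $I$ by intervals strictly shorter than $I_2'$), then apply additivity of $\pi$ exactly as you do. A second, trivial slip: in your explicit covering the roles of $p'$ and $q'$ must be assigned so that $q'$ separates $q$ from $p'$ inside $I_2'$; with the opposite labelling the two components you select are $(\mathrm{start},p')$ and $(q',\mathrm{end})$ in the order along $I$ given by $\mathrm{start},q,p',q',p,\mathrm{end}$, and their union omits the arc between $p'$ and $q'$, so $I_a\cup I_b\neq I$. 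Note that the paper's own phrase ``the algebra generated by the $\pi_I(\A(I))$ with $I\supset I_2$ is dense'' hides exactly the covering combinatorics you need to make precise, so supplying it carefully would in fact complete both write-ups.
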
 
 \begin{proof}  Let $\pi$ be a locally normal representation of $\A$, then  $\pi_{I_2}(F)$ is an infinite-dimensional countably decomposable type I factor and hence is isomorphic to $B(\H)$ where $\H$ is the  (separable) vacuum Hilbert space of 
 $\A$. Accordingly there exists a Hilbert space $\tilde{\H}$ and a unitary  $w: \H_\pi \mapsto \H \otimes \tilde{\H}$ such that $w\pi_{I_2}(F)w^*= B(\H) \otimes \unit$.  Let $I \in \I$ be any interval containing $I_2$, so that $F \subset \A(I)$ and 
 $\pi_{I_2}(F)=\pi_I(F)$. Since $\pi_I(\A(I))' \subset \pi_I(F)'$, $w\pi_I(\A(I))' w^*= \unit \otimes R$ for some von Neumann algebra $R$ on $\tilde{\H}$.  Accordingly, 
 \begin{eqnarray*}
 \pi_I(F)' \cap \left(\pi_I(F)\vee \pi_I(\A(I))'  \right) &=& w^* \left(\unit \otimes B(\tilde{\H}) \right)\cap \left( B(\H) \bar{\otimes} R \right)w \\
 &=& w^* \left(\unit \otimes R \right) w  = \pi_I(\A(I))' 
 \end{eqnarray*} 
 and by taking the commutants in the above equality we find 
 $$\pi_I(\A(I))= \pi_I(F) \vee \left( \pi_I(F)' \cap \pi_I(\A(I)) \right) \subset \pi_I(F) \vee \left( \pi_I(F)' \cap \pi(\Alg(\A)) \right)''. $$ 
 By additivity of the net $\A$ and the local normality of $\pi$ the algebra generated by the von Neumann factors $\pi_I(\A(I))$ with $I\in \I$ containing $I_2$ is strongly operator dense in $\pi(C^*(\A))''$. Hence 
$$\pi(C^*(\A))'' \subset \pi_{I_2}(F) \vee \left( \pi_{I_2}(F)' \cap \pi(\Alg(\A)) \right)''. $$
The remaining inclusion is obvious.
\end{proof}

\begin{lemma}\label{typeIIIrelativecommutant} Let $\A$ be a conformal net on $S^1$ and let $\pi$ be a locally normal representations of $\A$ with finite statistical dimension. Then the $*$-algebra $\pi_I(\A(I))'\cap \pi(\Alg(\A))$ is $\sigma$-weakly closed in $B(\H_\pi)$ for all $I\in \I$. 
\end{lemma}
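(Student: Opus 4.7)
\medskip

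The plan is to reduce the lemma to a direct application of Proposition \ref{intermediatesubalgebras}. Concretely, I would set
\[
 N := \pi_{I'}(\A(I')), \qquad M := \pi_I(\A(I))', \qquad B_0 := \pi_I(\A(I))' \cap \pi(\Alg(\A)),
\]
and check that the triple $(N, B_0, M)$ satisfies the hypotheses of Proposition \ref{intermediatesubalgebras}. The desired conclusion, $\sigma$-weak closedness of $B_0$, would then be immediate.

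First I would verify the containment $N \subset B_0 \subset M$. The inclusion $B_0 \subset M$ is by definition. Since $\A(I') \subset \Alg(\A)$ we have $N = \pi_{I'}(\A(I')) \subset \pi(\Alg(\A))$, and as recalled in Section \ref{sec:prelim}, local normality of $\pi$ together with additivity gives $\pi_{I'}(\A(I')) \subset \pi_I(\A(I))'$; combining these two facts, $N \subset B_0$. Clearly $B_0$ is a $*$-subalgebra of $M$ containing $N$.

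Next I would check the structural requirements of Proposition \ref{intermediatesubalgebras}. Since $\pi$ has finite statistical dimension, $\H_\pi$ is separable, so every local image $\pi_J(\A(J))$ (hence its commutant) has separable predual and is therefore countably decomposable; in particular $M$ is countably decomposable. Moreover $N = \pi_{I'}(\A(I'))$ is a type III$_1$ factor by local normality of $\pi$. Finiteness of the index $[M:N]$ is the crucial point: by the very definition of $d(\pi)$ the minimal index of the dual inclusion $\pi_I(\A(I)) \subset \pi_{I'}(\A(I'))'$ equals $d(\pi)^2 < \infty$, and passing to commutants preserves the minimal index, giving
\[
 [\pi_I(\A(I))' : \pi_{I'}(\A(I'))] = d(\pi)^2 < \infty.
\]
Hence $N \subset M$ is a finite-index inclusion of countably decomposable von Neumann algebras with $N$ a type III factor.

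With all hypotheses in place, Proposition \ref{intermediatesubalgebras} applies to $B_0$, yielding the $\sigma$-weak closedness claimed in the lemma. I do not expect any serious obstacle: the only subtle step is identifying the right finite-index inclusion, and this is exactly what the finiteness of the statistical dimension provides via the commutant symmetry of the minimal index; the rest is purely bookkeeping using properties already collected in Section \ref{sec:prelim} and in Remark \ref{rem:CFT-univC}.
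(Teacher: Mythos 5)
Your proposal is correct and follows essentially the same route as the paper: the paper's proof likewise observes that $\pi_{I'}(\A(I')) \subset \pi_I(\A(I))'\cap \pi(\Alg(\A)) \subset \pi_I(\A(I))'$ and that the outer inclusion of countably decomposable type III factors has index $d(\pi)^2 < +\infty$, then invokes Proposition \ref{intermediatesubalgebras}. Your additional remarks (separability of $\H_\pi$, preservation of the minimal index under passage to commutants) just make explicit what the paper leaves implicit.
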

\begin{proof} Let $\pi$ be a locally normal representation of $\pi$ with statistical dimension $d(\pi) < +\infty$. Then, for any $I\in \I$, 
we have
$$\pi_{I'}(\A(I')) \subset \pi_I(\A(I))'\cap \pi(\Alg(\A)) \subset \pi_I(\A(I))' .$$ 
The inclusion of countably decomposable type III factors $\pi_{I'}(\A(I')) \subset \pi_I(\A(I))'$ has index 
$d(\pi)^2 < +\infty$ and the conclusion follows from Proposition \ref{intermediatesubalgebras}.
\end{proof}

\begin{proposition}\label{typeIrelativecommutant}
 Let $\A$ be a conformal net on $S^1$ with the split property and let $F$ be a type {\rm I} factor such that 
$\A(I_1) \subset F \subset  \A(I_2)$ for a given pair $I_1, I_2 \in \I$ with $\overline{I_1} \subset I_2$. Then the $*$-algebra 
$\pi_{I_2}(F)'\cap \pi(\Alg(\A))$ is $\sigma$-weakly closed in $B(\H_\pi)$, for every locally normal representation $\pi$ of $\A$ with finite statistical dimension.
\end{proposition}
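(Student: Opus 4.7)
The plan is to derive this proposition from Lemma~\ref{typeIIIrelativecommutant} by exploiting the inclusion $\A(I_1) \subset F$: it makes the commutant of $\pi_{I_2}(F)$ sit inside the commutant of $\pi_{I_1}(\A(I_1))$, which is the object already handled by that lemma.

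First I would observe that since $I_1 \subset I_2$, the net structure gives $\pi_{I_1} = \pi_{I_2}|_{\A(I_1)}$, so the inclusion $\A(I_1) \subset F$ yields $\pi_{I_1}(\A(I_1)) = \pi_{I_2}(\A(I_1)) \subset \pi_{I_2}(F)$. Taking commutants reverses this:
$$\pi_{I_2}(F)' \;\subset\; \pi_{I_1}(\A(I_1))'.$$
Consequently the target $*$-algebra can be rewritten as
$$\pi_{I_2}(F)' \cap \pi(\Alg(\A)) \;=\; \pi_{I_2}(F)' \cap \bigl(\pi_{I_1}(\A(I_1))' \cap \pi(\Alg(\A))\bigr),$$
since the additional intersection with $\pi_{I_1}(\A(I_1))'$ on the right-hand side is automatic, given the previous inclusion.

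Next I would invoke Lemma~\ref{typeIIIrelativecommutant} at the interval $I_1 \in \I$; its hypotheses are precisely that $\pi$ is locally normal with finite statistical dimension. The lemma ensures that $\pi_{I_1}(\A(I_1))' \cap \pi(\Alg(\A))$ is $\sigma$-weakly closed in $B(\H_\pi)$, hence a von Neumann algebra. Since $\pi_{I_2}(F)'$ is also a von Neumann algebra, the intersection of the two is a von Neumann algebra and in particular $\sigma$-weakly closed.

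The argument is essentially a one-line reduction, so no serious obstacle is anticipated: neither the split-property hypothesis nor the condition $\overline{I_1} \subset I_2$ is actually used in the proof itself; they enter the statement only to guarantee the existence of such an intermediate type~I factor $F$. The real content is the observation that $F \supset \A(I_1)$ is exactly the input needed to drop the problem inside the setting already controlled by Lemma~\ref{typeIIIrelativecommutant}.
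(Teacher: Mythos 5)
Your proposal is correct and follows essentially the same route as the paper: rewrite $\pi_{I_2}(F)'\cap\pi(\Alg(\A))$ as $\pi_{I_2}(F)'\cap\bigl(\pi_{I_1}(\A(I_1))'\cap\pi(\Alg(\A))\bigr)$ using $\A(I_1)\subset F$, and conclude by Lemma \ref{typeIIIrelativecommutant} together with the $\sigma$-weak closedness of the commutant $\pi_{I_2}(F)'$. Your additional remark about which hypotheses are actually used is accurate.
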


\begin{proof}  Let $\pi$ be a locally normal representation of $\pi$ with statistical dimension $d(\pi) < +\infty$. Since $\pi_{I_2}(F)'\cap \pi(\Alg(\A)) \subset \pi_{I_1}(\A(I_1))'\cap \pi(\Alg(\A))$ we have 
$$\pi_{I_2}(F)'\cap \pi(\Alg(\A)) = \pi_{I_2}(F)'\cap \pi_{I_1}(\A(I_1))' \cap \pi(\Alg(\A))$$ and the conclusion follows from Lemma 
\ref{typeIIIrelativecommutant}.
\end{proof}

\begin{corollary}\label{irrF} 
 Let $\A$ be a conformal net on $S^1$ with the split property and let $F$ be a type {\rm I} factor such that $\A(I_1) \subset F \subset  \A(I_2)$ for a given pair $I_1, I_2 \in \I$ such that $\overline{I_1} \subset I_2$. Then $\pi_{I_2}(F)' \subset \pi(\Alg(\A))$ for every irreducible locally normal representation $\pi$ of $\A$ with finite statistical dimension.
\end{corollary}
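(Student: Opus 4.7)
The plan is to combine Propositions \ref{typeIgenerate} and \ref{typeIrelativecommutant} with the irreducibility of $\pi$ in order to identify the whole commutant $\pi_{I_2}(F)'$ with a relative commutant sitting inside $\pi(\Alg(\A))$. Set $M := \pi_{I_2}(F)'\cap \pi(\Alg(\A))$; this is a unital $*$-subalgebra of $\pi_{I_2}(F)'$. Since $\pi$ is irreducible and locally normal, $\pi(C^*(\A))' \subset \bigcap_{I\in\I}\pi_I(\A(I))' = \C\unit$, hence $\pi(C^*(\A))'' = B(\H_\pi)$, and Proposition \ref{typeIgenerate} yields
\[
B(\H_\pi) = \pi_{I_2}(F) \vee M''.
\]

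The main step will be to extract from this the identity $M'' = \pi_{I_2}(F)'$. First I would observe that $\pi_{I_2}(F)$ is a countably decomposable infinite-dimensional type I factor: $\pi_{I_2}|_{F}$ is faithful (as recalled in Section \ref{sec:prelim}) and normal, $F$ is a type I factor and is infinite-dimensional since it contains the type III factor $\A(I_1)$, and $\H_\pi$ is separable because $\pi$, being irreducible, is cyclic. Choose then a spatial identification $\H_\pi \cong \H_1 \otimes \H_2$ under which $\pi_{I_2}(F) = B(\H_1)\otimes \C\unit$ and $\pi_{I_2}(F)' = \C\unit\otimes B(\H_2)$. Since $M'' \subset \pi_{I_2}(F)'$, necessarily $M'' = \C\unit\otimes N$ for some von Neumann algebra $N \subset B(\H_2)$, and then
\[
B(\H_\pi) = \pi_{I_2}(F) \vee M'' = B(\H_1)\,\bar\otimes\, N
\]
forces $N = B(\H_2)$, that is, $M'' = \pi_{I_2}(F)'$.

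To close the argument I would invoke Proposition \ref{typeIrelativecommutant}, which applies because $d(\pi) < \infty$ and which asserts that $M$ is $\sigma$-weakly closed in $B(\H_\pi)$. As a unital $\sigma$-weakly closed $*$-subalgebra, $M$ coincides with its bicommutant $M''$, so combining with the previous step gives $\pi_{I_2}(F)' = M \subset \pi(\Alg(\A))$, which is the desired conclusion. The only genuinely nontrivial step in the plan is the tensor-factorization identification of $M''$ with the full commutant $\pi_{I_2}(F)'$; the remainder is a straightforward combination of Propositions \ref{typeIgenerate} and \ref{typeIrelativecommutant} with the consequences of irreducibility of $\pi$.
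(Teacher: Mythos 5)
Your proposal is correct and follows essentially the same route as the paper: both combine Proposition \ref{typeIgenerate} (with irreducibility giving $\pi(C^*(\A))''=B(\H_\pi)$) and Proposition \ref{typeIrelativecommutant} (so that $\pi_{I_2}(F)'\cap\pi(\Alg(\A))$ is already a von Neumann algebra) and then conclude from the type I factor structure of $\pi_{I_2}(F)$. The only difference is that you spell out the tensor-factorization argument that the paper leaves implicit in the phrase ``as $\pi_{I_2}(F)$ is a type I factor.''
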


\begin{proof}  Let $\pi$ be an irreducible locally normal representation of $\pi$ with statistical dimension $d(\pi) < +\infty$. By Proposition \ref{typeIrelativecommutant} $\pi_{I_2}(F)'\cap \pi(\Alg(\A))$ is a von Neumann algebra on $B(\H_\pi)$ and by 
Proposition \ref{typeIgenerate} and the irreduciblity of $\pi$ we have $\left(\pi_{I_2}(F)'\cap \pi(\Alg(\A)) \right)\vee \pi_{I_2}(F) =\B(\H_\pi)$. 
As  $\pi_{I_2}(F)$ is a type I factor, we must therefore have $\pi_{I_2}(F)'\cap \pi(\Alg(\A))=\pi_{I_2}(F)'$ i.e. 
$\pi_{I_2}(F)' \subset \pi(\Alg(\A))$
\end{proof}

\begin{theorem}\label{piC*A_irr} Let $\A$ be a conformal net on  $S^1$ with the split property. Then $\pi(\Alg(\A))=\pi(C^*(\A))=B(\H_\pi)$, for every irreducible locally normal representation $\pi$ of $\A$ with finite statistical dimension. 
\end{theorem}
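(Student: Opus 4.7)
The plan is to combine the split property with the preceding Corollary~\ref{irrF} to squeeze both a type I factor and its commutant into $\pi(\Alg(\A))$, and then exploit the elementary fact that a type I factor and its commutant together generate all bounded operators.

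First I would invoke the split property of $\A$: choose any pair of intervals $I_1,I_2\in\I$ with $\overline{I_1}\subset I_2$, together with a type I factor $F$ satisfying $\A(I_1)\subset F\subset \A(I_2)$. Since $\pi$ is locally normal and $F$ is a countably decomposable infinite-dimensional type I factor, $\pi_{I_2}(F)$ is itself a (properly infinite, countably decomposable) type I factor on $\H_\pi$.

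Next I would observe two inclusions inside $\pi(\Alg(\A))$. On the one hand, picking any $I\in\I$ with $I_2\subset I$, we have $F\subset \A(I_2)\subset \A(I)$, hence $\pi_{I_2}(F)=\pi_I(F)\subset \pi_I(\A(I))\subset \pi(\Alg(\A))$. On the other hand, Corollary~\ref{irrF} gives directly $\pi_{I_2}(F)'\subset \pi(\Alg(\A))$ under the irreducibility and finite-dimensionality assumptions on $\pi$. Combining these two,
\[
\pi_{I_2}(F)\vee \pi_{I_2}(F)' \subset \pi(\Alg(\A)).
\]

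Finally, since $\pi_{I_2}(F)$ is a type I factor on $\H_\pi$, one has $\pi_{I_2}(F)\vee \pi_{I_2}(F)'=B(\H_\pi)$ by the standard tensor-product decomposition $\H_\pi\cong K\otimes K'$ with $\pi_{I_2}(F)=B(K)\otimes \unit$ and $\pi_{I_2}(F)'=\unit\otimes B(K')$. Therefore $\pi(\Alg(\A))=B(\H_\pi)$, and since $\pi(\Alg(\A))\subset \pi(C^*(\A))\subset B(\H_\pi)$ we also obtain $\pi(C^*(\A))=B(\H_\pi)$.

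The main obstacle to this argument has already been absorbed into Corollary~\ref{irrF} (and behind it Proposition~\ref{typeIrelativecommutant} and its reliance on Proposition~\ref{intermediatesubalgebras}): the delicate point is controlling $\pi_{I_2}(F)'\cap \pi(\Alg(\A))$, which a priori is only a $*$-algebra and for which weak closedness is non-trivial and requires the finite-index input from Pimsner-Popa-type bases. Once that is in hand, the passage from the relative commutant to $B(\H_\pi)$ is essentially formal.
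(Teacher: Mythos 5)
Your reduction to Corollary \ref{irrF} is the right starting point, and the two inclusions $\pi_{I_2}(F)\subset\pi(\Alg(\A))$ and $\pi_{I_2}(F)'\subset\pi(\Alg(\A))$ are both correct. The gap is in the step where you conclude $\pi_{I_2}(F)\vee\pi_{I_2}(F)'\subset\pi(\Alg(\A))$. The symbol $\vee$ denotes the generated von Neumann algebra, i.e.\ the weak closure of the algebraic span of products, whereas $\pi(\Alg(\A))$ is at this stage only a $*$-algebra --- its weak closedness is precisely what the theorem asserts. From your two inclusions you only get that the algebraic span of $\{xy:\,x\in\pi_{I_2}(F),\,y\in\pi_{I_2}(F)'\}$ lies in $\pi(\Alg(\A))$; under the identification $\H_\pi\cong K\otimes K'$ this span is the algebraic tensor product $B(K)\odot B(K')$, whose norm closure is the spatial tensor product $B(K)\otimes B(K')$, a \emph{proper} subalgebra of $B(K\otimes K')$ when both factors are infinite-dimensional. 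So the passage from the relative commutant to $B(\H_\pi)$ is not essentially formal; it is exactly where a further idea is needed.

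The paper closes this gap with a compression trick using two nested split pairs: it takes type I factors $F_1, F_2$ with $\A(I_1)\subset F_1\subset\A(I_2)\subset F_2\subset\A(I_3)$ and with $\pi_{I_3}(F_1)'\cap\pi_{I_3}(F_2)$ infinite-dimensional, picks a minimal projection $p\in\pi_{I_3}(F_1)$, notes $pB(\H_\pi)p=p\,\pi_{I_3}(F_1)'$, and then uses an isometry $w\in\pi_{I_3}(F_2)$ with $ww^*=p$ to write $B(\H_\pi)=w^*\pi_{I_3}(F_1)'w$. Every factor in this last expression lies in $\pi(\Alg(\A))$ (using Corollary \ref{irrF} for $\pi_{I_3}(F_1)'$), so the inclusion $B(\H_\pi)\subset\pi(\Alg(\A))$ is obtained purely algebraically, with no weak closure taken. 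You would need to add this device, or an equivalent one, to make your argument work.
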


\begin{proof}
Let $\pi$ be any irreducible locally normal representation of $\A$ with finite statistical dimension. It is enough to prove that 
$\pi(\Alg(\A)) = B(\H_\pi)$
Let $I_1, I_2, I_3 \in \I$ be such that $\overline{I_1} \subset I_2$ and $\overline{I_2} \subset I_3$ and let $F_1, F_2$ be type I factors 
such that $\A(I_1) \subset F_1 \subset \A(I_2) \subset F_2 \subset \A(I_3)$. Then $\pi_{I_3}(F_1)$ and  $\pi_{I_3}(F_2)$ are 
infinite-dimensional factors with infinite-dimensional commutants on the separable Hilbert space $\H_\pi$. Moreover the type I factor 
$\pi_{I_3}(F_1)' \cap \pi_{I_3}(F_2)$ is infinite-dimensional since it contains the type III factor $\pi_{I_3}(\A(I_2))' \cap \pi_{I_3}(F_2)$.
Clearly $\pi_{I_3}(F_2) \subset \pi(\Alg(\A))$ and by Corollary \ref{irrF} we also have $\pi_{I_3}(F_1)' \subset \pi(\Alg(\A))$. 
Let $p$ be a minimal projection in $\pi_{I_3}(F_1)$. Then $p$ is equivalent to the identity in $\pi_{I_3}(F_2)$. 
Let $w \in \pi_{I_3}(F_2)$ be an isometry such that $ww^*=p$. Then $wB(\H_\pi)w^*=pB(\H_\pi)p=p\pi_{I_3}(F_1)'$ and hence 
$B(\H_\pi)=w^*p\pi_{I_3}(F_1)'w = w^*\pi_{I_3}(F_1)'w \subset \pi(\Alg(\A)) . $
\end{proof}
In order to understand the reducible case we need to consider the center of $\pi(C^*(\A))$.

\begin{proposition}\label{piC*A_center}
Let $\A$ be a conformal net on $S^1$ with the split property. Then we have 
$\ZZ(\pi(\Alg(\A)))= \ZZ(\pi(C^*(\A)))= \ZZ(\pi(C^*(\A))'')$, for every locally normal representation $\pi$ of $\A$ with finite statistical dimension. 
\end{proposition}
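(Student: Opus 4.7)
First, observe that
\[\ZZ(\pi(\Alg(\A))) \;\subset\; \ZZ(\pi(C^*(\A))) \;\subset\; \ZZ(\pi(C^*(\A))'')\]
holds for purely formal reasons: the first inclusion follows because $\pi(\Alg(\A))$ is norm-dense in $\pi(C^*(\A))$ and commutation is a norm-closed condition; the second because $\pi(C^*(\A))''' = \pi(C^*(\A))'$, so any element of $\pi(C^*(\A))$ commuting with $\pi(C^*(\A))$ automatically commutes with $\pi(C^*(\A))''$ and sits inside it. Consequently, the entire statement reduces to establishing the single inclusion
\[\ZZ(\pi(C^*(\A))'') \;\subset\; \pi(\Alg(\A)),\]
since a central element of $\pi(C^*(\A))''$ trivially commutes with $\pi(\Alg(\A))$ and hence falls in $\ZZ(\pi(\Alg(\A)))$ as soon as it lies in $\pi(\Alg(\A))$.

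By the split property, I would pick intervals $I_1, I_2 \in \I$ with $\overline{I_1} \subset I_2$ and an intermediate type $\mathrm{I}$ factor $F$ satisfying $\A(I_1) \subset F \subset \A(I_2)$. Since $\pi_{I_2}|_F$ is a nonzero normal $*$-homomorphism from a type $\mathrm{I}$ factor (its restriction to the simple type $\mathrm{III}$ subfactor $\A(I_1)$ agrees with the faithful $\pi_{I_1}$), the map $\pi_{I_2}|_F$ itself is faithful, so $\pi_{I_2}(F)$ is an infinite-dimensional countably decomposable type $\mathrm{I}$ factor on the separable Hilbert space $\H_\pi$. This produces a unitary identification $\H_\pi \cong \H_1 \otimes \H_2$ under which $\pi_{I_2}(F) = B(\H_1) \otimes \unit$ and $\pi_{I_2}(F)' = \unit \otimes B(\H_2)$.

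The next step is to combine Proposition \ref{typeIgenerate}, which gives
\[\pi(C^*(\A))'' \;=\; \pi_{I_2}(F) \vee \bigl(\pi_{I_2}(F)' \cap \pi(\Alg(\A))\bigr)'',\]
with Proposition \ref{typeIrelativecommutant}, which says that $N := \pi_{I_2}(F)' \cap \pi(\Alg(\A))$ is already $\sigma$-weakly closed, so $N'' = N$. Under the tensor identification one has $N = \unit \otimes N_0$ for some von Neumann algebra $N_0 \subset B(\H_2)$, whence
\[\pi(C^*(\A))'' \;=\; B(\H_1) \,\bar{\otimes}\, N_0.\]
Taking centers factor by factor,
\[\ZZ(\pi(C^*(\A))'') \;=\; \unit \otimes \ZZ(N_0) \;\subset\; \unit \otimes N_0 \;=\; N \;\subset\; \pi(\Alg(\A)),\]
which is exactly the required inclusion.

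I do not expect a serious obstacle here: the analytic weight is absorbed in the two invoked propositions, where the finite statistical dimension hypothesis enters through the Pimsner-Popa theory of finite-index inclusions (Proposition \ref{intermediatesubalgebras}). Once they are available, the argument is a bookkeeping exercise with the tensor splitting $B(\H_1) \,\bar{\otimes}\, B(\H_2)$ forced by the type $\mathrm{I}$ factor $F$. The one point requiring explicit care is verifying that $\pi_{I_2}(F)$ really acts standardly on $\H_\pi$ as a type $\mathrm{I}$ factor (rather than as some possibly degenerate homomorphic image), which is precisely why the faithfulness of $\pi_{I_2}|_F$ -- forced by the presence of the type $\mathrm{III}$ subfactor $\A(I_1)$ -- must be noted explicitly.
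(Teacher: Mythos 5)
Your proposal is correct and follows essentially the same route as the paper's proof: reduce to the single inclusion $\ZZ(\pi(C^*(\A))'')\subset\pi(\Alg(\A))$, invoke Propositions \ref{typeIgenerate} and \ref{typeIrelativecommutant} for the split type I factor $F$, and read off the center from the resulting tensor decomposition $B(\H_1)\,\bar{\otimes}\,N_0$. The only differences are cosmetic (e.g.\ your explicit check that $\pi_{I_2}|_F$ is faithful, which the paper leaves implicit).
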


\begin{proof} Let $\pi$ be a locally normal representation of $\A$ with finite statistical dimension. 
Then $\pi$ is a direct sum of finitely many irreducible and hence cyclic representations of $\A$ so that $\H_\pi$ is separable. 
 Since $\pi(\Alg(\A))'= \pi(C^*(\A))'$ we have only to show that $\ZZ(\pi(C^*(\A))'') \subset \pi(\Alg(\A))$.  
Let $I_1, I_2 \in \I$ be such that $\overline{I_1} \subset I_2$, and let $F$ be a type I factor such that $\A(I_1) \subset F \subset \A(I_2)$. By Proposition \ref{typeIrelativecommutant}, $R:=\pi_{I_2}(F)' \cap \pi(\Alg(\A))$ is a von Neumann algebra on $\H_\pi$. Moreover by Proposition \ref{typeIgenerate}, $\pi_{I_2}(F) \vee R = \pi(C^*(\A))''$. 
Now let $w: \H_\pi \ra \H_\pi \otimes \H_\pi$ be a unitary operator such that $w\pi_{I_2}(F)w^*=B(\H_\pi)\otimes \unit$. Then $wRw^* = \unit \otimes \tilde{R}$ for some von Neumann algebra $\tilde{R}$ on $\H_\pi$ and accordingly 
$w\pi(C^*(\A))''w^*=B(\H_\pi)\bar{\otimes} \tilde{R}$ so that 
$\ZZ(\pi(C^*(\A))'')=w^*\left( 1 \otimes \ZZ(\tilde{R}) \right)w = \ZZ(R) \subset \pi(\Alg(\A)) .$
\end{proof}

We are now ready to prove one of the main results of this paper. 

\begin{theorem}\label{piC*A_reducible} 
Let $\A$ be a conformal net on $S^1$ with the split property. Then $\pi(\Alg(\A))= \pi(C^*(\A)) =\pi(C^*(\A))''$ for every locally normal representation $\pi$ of $\A$ with finite statistical dimension. 
\end{theorem}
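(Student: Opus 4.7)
The plan is to reduce the statement to the irreducible case (Theorem \ref{piC*A_irr}) via the isotypic decomposition, using Proposition \ref{piC*A_center} to ensure that the relevant central projections already lie in $\pi(\Alg(\A))$. Since all three algebras in the statement satisfy $\pi(\Alg(\A)) \subset \pi(C^*(\A)) \subset \pi(C^*(\A))''$, it suffices to establish the reverse inclusion $\pi(C^*(\A))'' \subset \pi(\Alg(\A))$.

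First, as already used in the proof of Proposition \ref{piC*A_center}, a locally normal representation $\pi$ with $d(\pi)<\infty$ decomposes as a finite direct sum of irreducible locally normal subrepresentations, each of finite statistical dimension. Grouping unitarily equivalent ones, we obtain $\pi \cong \bigoplus_{i=1}^{m} \sigma_i \otimes \unit_{n_i}$ with pairwise inequivalent irreducibles $\sigma_i$ and multiplicities $n_i \in \N$. Let $p_1,\dots,p_m \in \pi(C^*(\A))''$ be the minimal central projections corresponding to the isotypic components; then
\[
\pi(C^*(\A))'' = \bigoplus_{i=1}^m p_i \pi(C^*(\A))'' = \bigoplus_{i=1}^m B(\H_{\sigma_i}) \otimes \unit_{n_i},
\]
the last equality by Theorem \ref{piC*A_irr} applied to each $\sigma_i$. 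By Proposition \ref{piC*A_center}, each $p_i$ belongs to $\pi(\Alg(\A))$.

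Next I compute $p_i \pi(\Alg(\A))$ as a subalgebra of $p_i \pi(C^*(\A))''$. Since $p_i \pi$, as a representation of $\Alg(\A)$, is unitarily equivalent to $\sigma_i \otimes \unit_{n_i}$, for every $a \in \Alg(\A)$ the element $p_i \pi(a)$ corresponds to $\sigma_i(a) \otimes \unit_{n_i}$; hence
\[
p_i \pi(\Alg(\A)) = \sigma_i(\Alg(\A)) \otimes \unit_{n_i} = B(\H_{\sigma_i}) \otimes \unit_{n_i} = p_i \pi(C^*(\A))'',
\]
where the second equality uses Theorem \ref{piC*A_irr} applied to the irreducible $\sigma_i$. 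Because $p_i \in \pi(\Alg(\A))$, we also have $p_i \pi(\Alg(\A)) \subset \pi(\Alg(\A))$. Summing over $i$ and invoking the central decomposition above,
\[
\pi(C^*(\A))'' = \bigoplus_{i=1}^m p_i \pi(C^*(\A))'' = \bigoplus_{i=1}^m p_i \pi(\Alg(\A)) \subset \pi(\Alg(\A)),
\]
which completes the argument.

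The main obstacle is a conceptual one rather than a technical one: it is ensuring that the minimal central projections of the weak closure $\pi(C^*(\A))''$ are genuinely implemented inside the dense $*$-subalgebra $\pi(\Alg(\A))$. This is precisely the role of Proposition \ref{piC*A_center}, whose proof in turn depended on Propositions \ref{typeIgenerate} and \ref{typeIrelativecommutant}; once the central projections are available in $\pi(\Alg(\A))$, the reduction to the irreducible case (Theorem \ref{piC*A_irr}) is immediate and the argument becomes essentially algebraic.
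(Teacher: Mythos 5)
Your proposal is correct and follows essentially the same route as the paper: decompose $\pi(C^*(\A))''$ along the minimal central projections (the isotypic components), apply Theorem \ref{piC*A_irr} to each irreducible constituent to identify the corners with $\pi(\Alg(\A))$-corners, and use Proposition \ref{piC*A_center} to place the central projections themselves inside $\pi(\Alg(\A))$. The only cosmetic difference is that you make the multiplicities $\sigma_i\otimes\unit_{n_i}$ explicit where the paper simply speaks of finite multiples of irreducibles.
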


\begin{proof} Let $\pi$ be a locally normal representation of $\A$ with finite statistical dimension. Then $\pi$ is a direct sum of finitely many irreducible representations and hence $\ZZ(\pi(C^*(A))'')$ is finite-dimensional. 
We only have to show that $\pi(\Alg(\A)) = \pi(C^*(\A))''$

Let $e_1, e_2, \dots e_n$ be a maximal orthogonal family of minimal projections in $\ZZ(\pi(C^*(A))'')$. Then 
$$\pi(C^*(\A))'' = \bigoplus_{i=1}^n e_i\pi(C^*(\A))'' e_i .$$
 The representation $e_i\pi(\cdot) e_i$ on $e_i\H_\pi$ is a  finite multiple of an irreducible representation with finite statistical dimension, for $i=1,2, \dots, n$. Hence, by Theorem \ref{piC*A_irr}, we have 
$e_i\pi(C^*(\A))'' e_i=e_i\pi(\Alg(\A))e_i$, for $i=1,2, \dots, n$. By Proposition \ref{piC*A_center} we have $e_i \in \pi(\Alg(\A))$, for 
$i=1,2, \dots, n$. It follows that   
$$\pi(C^*(\A))'' = \bigoplus_{i=1}^n e_i\pi(\Alg(\A))e_i = \pi(\Alg(\A)) .$$
\end{proof}

\begin{corollary}\label{piC*A_sum} Let $\A$ be a conformal net on $S^1$ with the split property. Then $\pi(C^*(\A))$ is a direct sum of finitely many countably decomposable infinite-dimensional type {\rm I} factors for every locally normal representation $\pi$ of $\A$ with finite statistical dimension. 
\end{corollary}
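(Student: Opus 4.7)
The plan is to read off the corollary directly from Theorem \ref{piC*A_reducible} combined with the structural information already established for the center and for irreducible representations. First I would observe that by Theorem \ref{piC*A_reducible}, $\pi(C^*(\A))=\pi(C^*(\A))''$ is a von Neumann algebra on $\H_\pi$. Since $\pi$ has finite statistical dimension, it is a direct sum of finitely many irreducible representations (each of finite statistical dimension), so $\H_\pi$ is separable and the center $\ZZ(\pi(C^*(\A))'')$ is finite-dimensional.

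Next I would invoke Proposition \ref{piC*A_center} (or just the finite-dimensionality of the center) to pick the maximal orthogonal family $e_1,\dots,e_n$ of minimal projections in $\ZZ(\pi(C^*(\A)))$; these sum to the identity of $\pi(C^*(\A))$ and give the von Neumann direct sum decomposition
\[
\pi(C^*(\A)) \;=\; \bigoplus_{i=1}^n e_i\,\pi(C^*(\A))\,e_i,
\]
each summand being a factor (since $e_i$ is minimal central).

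Then for each $i$ the compressed representation $a\mapsto e_i\pi(a)e_i$ on $e_i\H_\pi$ is a finite multiple of an irreducible locally normal representation $\pi_i$ of $\A$ with finite statistical dimension. By Theorem \ref{piC*A_irr}, $\pi_i(C^*(\A))=B(\H_{\pi_i})$, so the compressed algebra equals a finite amplification of $B(\H_{\pi_i})$, which is an infinite-dimensional type I factor. Countable decomposability is immediate from the separability of $e_i\H_\pi$ (inherited from $\H_\pi$), so each summand is a countably decomposable type $\mathrm{I}_\infty$ factor.

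There is no real obstacle here: the work has all been done in Theorems \ref{piC*A_irr} and \ref{piC*A_reducible} and in Proposition \ref{piC*A_center}. The only mild point to be careful about is that the amplification of $B(\H_{\pi_i})$ is again a type $\mathrm{I}_\infty$ factor (not merely a type I von Neumann algebra), which follows because $\H_{\pi_i}$ is infinite-dimensional (it supports an irreducible representation of a non-type-I algebra, e.g.\ of any $\pi_i(\iota_I(\A(I)))$, which is a type III factor).
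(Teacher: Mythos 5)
Your argument is correct and is essentially the paper's own: the corollary is read off from the proof of Theorem \ref{piC*A_reducible}, which already decomposes $\pi(C^*(\A))''$ over the finitely many minimal central projections and identifies each corner as a finite multiple of an irreducible representation, hence (by Theorem \ref{piC*A_irr}) a countably decomposable type $\mathrm{I}_\infty$ factor. Nothing further is needed.
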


For completely rational nets the assumption of finiteness of the statistical dimension in  Theorem  \ref{piC*A_reducible}
and Corollary  \ref{piC*A_sum}  turns out to be unnecessary as  shown by the following theorem.

\begin{theorem}\label{piC*A_rational} 
Let $\A$ be a completely rational conformal net on $S^1$ and let $\pi$ be a locally normal representation of $\A$. 
Then $\pi(\Alg(\A))= \pi(C^*(\A)) =\pi(C^*(\A))''$ and moreover $\pi(C^*(\A))$ is a direct sum of finitely many countably decomposable 
infinite-dimensional type {\rm I} factors. 
\end{theorem}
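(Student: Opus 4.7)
The strategy is to reduce to the finite-statistical-dimension case of Theorem \ref{piC*A_reducible} by exploiting the finiteness of the set of sectors. Since $\A$ is completely rational, it has the split property and admits only finitely many sectors $[\pi_1],\dots,[\pi_n]$, each with finite statistical dimension. My first step is to invoke the standard consequence of complete rationality (essentially from \cite{KLM}) that every locally normal representation $\pi$ decomposes as $\pi\cong\bigoplus_{i=1}^n \pi_i\otimes\unit_{\K_i}$ for suitable multiplicity Hilbert spaces $\K_i$ (possibly zero-, finite- or countably-infinite-dimensional). Irreducibility of each $\pi_i$ then immediately gives $\pi(C^*(\A))'=\bigoplus_i \unit_{\H_{\pi_i}}\bar{\otimes} B(\K_i)$, so that $\pi(C^*(\A))''=\bigoplus_i B(\H_{\pi_i})\bar{\otimes}\unit_{\K_i}$ is already a direct sum of $n$ countably decomposable infinite-dimensional type I factors, settling the last assertion of the theorem.

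\textbf{Extracting the isotypic projections.} The remaining claim is the stronger equality $\pi(\Alg(\A))=\pi(C^*(\A))''$. To prove it I would use the ``minimal'' representation $\pi_{\min}:=\bigoplus_{i=1}^n \pi_i$, which has finite statistical dimension $\sum_i d(\pi_i)$. Theorem \ref{piC*A_reducible} applies to $\pi_{\min}$ and yields $\pi_{\min}(\Alg(\A))=\bigoplus_{i=1}^n B(\H_{\pi_i})$. In particular, each minimal central projection $e_i$ of this direct sum lies in $\pi_{\min}(\Alg(\A))$, so I can pick $a_i\in\Alg(\A)$ with $\pi_{\min}(a_i)=e_i$, or equivalently $\pi_j(a_i)=\delta_{ij}\unit_{\H_{\pi_i}}$ for all $j$. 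For any locally normal $\pi$ decomposed as in the previous paragraph, the identity $\pi(a_i)=\bigoplus_j \pi_j(a_i)\otimes\unit_{\K_j}$ then forces $\pi(a_i)=P_i$, where $P_i$ is the central projection in $\pi(C^*(\A))''$ onto the $i$-th isotypic component $\H_{\pi_i}\otimes\K_i$.

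\textbf{Concluding the algebra equality.} Since $P_i=\pi(a_i)\in\pi(\Alg(\A))$, the compression $P_i\pi(\Alg(\A))P_i=\pi(a_i\Alg(\A)a_i)$ is again contained in $\pi(\Alg(\A))$. On $P_i\H_\pi\cong\H_{\pi_i}\otimes\K_i$ it acts as $\pi_i(\Alg(\A))\otimes\unit_{\K_i}$, and Theorem \ref{piC*A_irr} (which applies because $d(\pi_i)<\infty$ and $\A$ has the split property) identifies $\pi_i(\Alg(\A))=B(\H_{\pi_i})$. Summing over $i$ gives $\pi(\Alg(\A))\supset\bigoplus_i B(\H_{\pi_i})\bar{\otimes}\unit_{\K_i}=\pi(C^*(\A))''$, and the reverse inclusion is automatic from $\Alg(\A)\subset C^*(\A)\subset \pi(C^*(\A))''$.

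\textbf{Main obstacle.} The delicate ingredient is the very first step, namely the decomposition $\pi\cong\bigoplus_{i=1}^n \pi_i\otimes\unit_{\K_i}$ with only $n$ summands, which requires semisimplicity of the locally normal representation category of $\A$ --- a nontrivial structural consequence of complete rationality. If one preferred to avoid invoking this externally, an alternative would be to apply the finite-index 2-interval inclusion together with Proposition \ref{intermediatesubalgebras} to any locally normal $\pi$, showing directly that $\pi(C^*(\A))'$ is atomic with at most $n$ minimal central projections, from which the desired decomposition would follow and the rest of the argument would proceed unchanged.
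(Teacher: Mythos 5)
Your proposal is correct and its first half coincides with the paper's argument: both rest on decomposing an arbitrary locally normal representation of a completely rational net into finitely many isotypic components built from the (finitely many, mutually inequivalent, finite statistical dimension) irreducible sectors --- exactly what the paper extracts from \cite[Cor.\ 39]{KLM} --- and this already yields the description of $\pi(C^*(\A))''$ as a finite direct sum of countably decomposable infinite-dimensional type I factors. The two proofs part ways on the equality $\pi(\Alg(\A))=\pi(C^*(\A))''$. The paper disposes of it in one line: $\pi$ is quasi-equivalent to the multiplicity-free representation $\bigoplus_{i}\pi_i$, which has finite statistical dimension, so Theorem \ref{piC*A_reducible} applies to the latter and the equality transfers through the normal $*$-isomorphism of the generated von Neumann algebras that a quasi-equivalence provides. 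You instead carry out this transfer explicitly: you use Theorem \ref{piC*A_reducible} for $\bigoplus_i\pi_i$ only to produce elements $a_i\in\Alg(\A)$ whose images in any locally normal $\pi$ are the isotypic central projections $P_i$, and then conclude by compressing with the $P_i$ and invoking Theorem \ref{piC*A_irr} on each block; all the intermediate steps (e.g.\ $P_i\pi(\Alg(\A))P_i=\pi(a_i\Alg(\A)a_i)\subset\pi(\Alg(\A))$ and $\pi_i(a_ixa_i)=\pi_i(x)$) check out. Both routes are sound; the paper's is shorter because it leans on the functoriality of quasi-equivalence, while yours is more self-contained and exhibits concretely which elements of $\Alg(\A)$ realize the central decomposition. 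Your suggested fallback via Proposition \ref{intermediatesubalgebras} is not needed, since, like the paper, you may simply quote \cite[Cor.\ 39]{KLM} for the semisimplicity of the locally normal representation theory.
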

\begin{proof} Let $\pi$ be a locally normal representation of the completely rational conformal net $\A$. Then according to \cite[Cor.39]{KLM} it is equivalent to a direct sum of irreducible locally normal representations with finite  statistical dimension. Since there are only a finite number of equivalence classes of  these representations it follows that  $\pi(C^*(\A))''$ is a direct sum of finitely many 
type I factors which are countably decomposable and infinite-dimensional because irreducible locally normal representations of 
a conformal net $\A$ acts on infinite-dimensional separable Hilbert spaces. Moreover, $\pi$ is quasi-equivalent to a locally normal representation with finite index and hence $\pi(\Alg(\A))= \pi(C^*(\A)) =\pi(C^*(\A))''$ by Theorem 
\ref{piC*A_reducible}.
\end{proof} 

If $\A$ is a completely rational conformal net on $S^1$ then Theorem \ref{piC*A_rational} gives a complete description of $\pi(C^*(\A))$ for all representations which are relevant for CFT and hence in view of the applications to quantum field theory the result appears to be completely satisfactory. If $\A$ is a conformal net which is not completely rational then in order to have 
a complete description, we can apply Theorem \ref{piC*A_reducible} if we assume the split property and if we restrict ourselves to representations 
which are quasi-equivalent to locally normal representations with finite statistical dimension. 
The split property does not appear to be a restrictive assumption. As far as we know there is no conformal net for which it has been proved that the split property fails. Moreover the standard construction of net without the split property by means of infinite tensor products gives rise 
to non-diffeomorphism covariant nets as shown in \cite[Section 6]{CW05}. On the other hand, although locally normal irreducible representations with infinite statistical dimension are in a certain sense pathological they naturally appear in  non-rational conformal net models \cite{Fre95,Car03,Car04,LX}. For this reason it could be useful to have some information about the structure of $\pi(C^*(\A))$ also in the case in which the locally normal representation $\pi$ is not assumed to have finite statistical dimension and 
in the remaining part of this section we will tackle this problem. This will also lead us to Corollary \ref{corCompacts} which was used 
in the proof of Proposition \ref{prop:U-propsCA} in order to show that the universal  C*-algebra $C^*(\A)$ of a conformal net on $S^1$ is neither simple nor purely infinite. 

  First, we shall show that $\pi(C^*(\A))$ contains ``heat kernels". We write $L_0^\pi$ for the conformal Hamiltonian (the infinitesimal generator of the rotation subgroup of $\Diff$ as explained in Section \ref{sec:prelim}) of $\A$ in the locally normal representation $\pi$. 
Since for $t\in \R$ $$\rme^{\rmi t L_0^\pi} \in \bigvee_{I\in \I} \pi_I(A(I))= \pi(C^*(\A))'' ,$$ 
recalling that $L_0^\pi \geq 0$, we also have $\rme^{-t L_0^\pi} \in \pi(C^*(\A))''$ for all $t \geq 0$.  Hence, as a consequence of 
Theorem \ref{piC*A_reducible}, if $\pi$ has finite statistical dimension then $\rme^{-t L_0^\pi} \in \pi(\Alg(\A))$ for all $t\geq 0$.

If $\pi$ is irreducible but is not assumed to have finite  statistical dimension then it follows from \cite{DFK} that, for any $t\in \R$, $\rme^{\rmi t L_0^\pi}$ is a finite product of local unitaries and therefore lies in $\pi(\Alg(\A))$. This result extends in the following way (the split property is not assumed):

\begin{theorem} \label{th:univ-heat-inCA}
Let $\A$ be a conformal net on $S^1$, $\pi$ a (not necessarily irreducible) locally normal representation of $\A$. Then $\rme^{-t L_0^\pi} \in \pi(\Alg(\A))$, for every $t\ge0$.
\end{theorem}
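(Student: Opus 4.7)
The plan is to prove the theorem in two stages.

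\emph{Stage one.} I would first establish $e^{isL_0^\pi}\in\pi(\Alg(\A))$ for every $s\in\R$ and every locally normal $\pi$, extending the observation from \cite{DFK} quoted in the paragraph preceding the theorem. The key point is that any rotation $R_s$ decomposes as a finite product $R_s=g_1 g_2\cdots g_n$ of diffeomorphisms each supported in a proper subinterval $I_j\subsetneq S^1$, a purely geometric fact about $\Diff$ independent of any representation. By Haag duality and the second covariance axiom for conformal nets, the local unitary $V(g_j)$ implementing $g_j$ lies in $\A(I_j)$ and satisfies $\pi_{I_j}(V(g_j))=U_\pi(g_j)$ for every locally normal $\pi$. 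Thus the universal element $a_s:=V(g_1)\cdots V(g_n)\in\Alg(\A)$ satisfies $\pi(a_s)=U_\pi(g_1)\cdots U_\pi(g_n)=U_\pi(R_s)=e^{isL_0^\pi}$ in every locally normal representation simultaneously.

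\emph{Stage two.} I would then invoke positivity of $L_0^\pi$ and the Poisson kernel identity
\[
e^{-t\lambda}=\int_{-\infty}^{\infty}\frac{t/\pi}{t^2+s^2}\,e^{is\lambda}\,ds,\qquad\lambda\geq 0,\;t>0,
\]
which, via the spectral theorem applied to $L_0^\pi\ge 0$, gives
\[
e^{-tL_0^\pi}=\int_{-\infty}^{\infty}\frac{t/\pi}{t^2+s^2}\,e^{isL_0^\pi}\,ds
=\pi\!\left(\int_{-\infty}^{\infty}\frac{t/\pi}{t^2+s^2}\,a_s\,ds\right),
\]
the Bochner integral on the right converging in norm in $C^*(\A)$ because the Poisson kernel is in $L^1(\R)$ and $s\mapsto a_s$ can be arranged to be norm-continuous (the decomposition of $R_s$ into localized diffeomorphisms depends continuously on $s$).

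\emph{Main obstacle.} The Bochner integral $\int P_t(s)\,a_s\,ds$ lies a priori only in $C^*(\A)$, not in $\Alg(\A)$, so the argument above secures only $e^{-tL_0^\pi}\in\pi(C^*(\A))$. Without assuming the split property, Theorem~\ref{piC*A_reducible} cannot be invoked to collapse this distinction. The hard step is therefore to upgrade membership to $\pi(\Alg(\A))$: my expectation is to refine the stage-one DFK decomposition so that $s\mapsto a_s$ varies in a finite-dimensional (or otherwise explicitly integrable) subspace of $\Alg(\A)$, allowing the integral to be realized as a concrete finite expression in local operators; alternatively one could exploit rotation invariance of $e^{-tL_0^\pi}$ together with an averaging argument to confine a judicious choice of integrand to a single local factor $\A(I)$ before integrating, so that the result sits inside $\iota_I(\A(I))\subset\Alg(\A)$.
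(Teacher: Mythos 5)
Your Stage two contains the fatal gap. The Poisson-kernel identity does give $\rme^{-tL_0^\pi}=\int_{\R}\frac{t/\pi}{t^2+s^2}\,\rme^{\rmi sL_0^\pi}\,ds$, but only as a weak (or strong-operator) integral: since $L_0^\pi$ is unbounded, $s\mapsto \rme^{\rmi sL_0^\pi}$ is \emph{not} norm-continuous ($\|\rme^{\rmi sL_0^\pi}-\rme^{\rmi s'L_0^\pi}\|=\sup_{\lambda\in\spec L_0^\pi}|\rme^{\rmi s\lambda}-\rme^{\rmi s'\lambda}|$ is typically $2$ for $s\neq s'$), its range is not norm-separable, and there is no Bochner integral in $B(\H_\pi)$, let alone in $C^*(\A)$. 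The same applies to $s\mapsto a_s$: the local unitaries $V(g_j(s))$ have unbounded self-adjoint generators affiliated to type III factors, so $s\mapsto V(g_j(s))$ is only strongly continuous; the claim that the decomposition "can be arranged to be norm-continuous" is false. Consequently your integral only produces an element of $\pi(C^*(\A))''$ -- which is already obvious from $\rme^{\rmi tL_0^\pi}\in\pi(C^*(\A))''$ and positivity of $L_0^\pi$, as the paper notes before the theorem -- and proves nothing about $\pi(C^*(\A))$, much less $\pi(\Alg(\A))$. The remedies you sketch cannot be made to work either: the $V(g)$ generate infinite-dimensional type III algebras, so there is no finite-dimensional integrable subspace, and $\rme^{-tL_0^\pi}$ cannot be confined to a single $\pi_I(\A(I))$ because it does not commute with $\pi_{I'}(\A(I'))$.

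The paper's proof avoids integration entirely and instead writes $\rme^{-tL_0^\pi}$ as an exact \emph{finite product} of bounded local operators. The two ingredients are: (i) Weiner's splitting \cite{Wei06} of the ``2-translations'' of $\PST$, $U(\tau^{(2)}_x)=\rme^{\rmi xK_+}\rme^{\rmi xK_-}$ with $K_\pm$ self-adjoint, bounded from below and affiliated to $\A(S^1_\pm)$, valid in every locally normal representation; and (ii) the theorem of Buchholz--D'Antoni--Longo \cite{BDL}, which uses positivity of the energy and the structure of $\operatorname{SL}(2,\R)$ to analytically continue group relations and express $\rme^{-tL_0^\pi}$ as a product of three operators of the form $\rme^{-c(t)K}$ with $K$ generators of 2-translations in two positions. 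Each such factor splits by (i) into two bounded positive operators lying in half-circle algebras (boundedness because $K_\pm$ are bounded below and $\tanh(t),\sinh(t)>0$), so $\rme^{-tL_0^\pi}$ is a scalar times a product of six elements of local algebras, manifestly in $\pi(\Alg(\A))$. This analytic-continuation-to-a-finite-local-product step is the essential idea missing from your proposal; your Stage one is sound but is not where the difficulty lies.
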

 
\begin{proof}
 
There are two important ingredients for our proof: one of them can be 
found in \cite{BDL}, the other in \cite{Wei06}. To keep the 
argument relatively short, we will extensively refer to those two
papers. At the same moment, however, we shall also try to avoid being ``unreadable'' and recall at 
least the basic concepts and ideas.
 
Consider the group $\PST \subset \Diff$ defined as the set of 
diffeomorphisms $g\in \Diff$ for which there exists a 
Möbius transformation $h\in \PSL$ so that
\[
g(z)^2 = h(z^2), \quad z\in\S.
\]
It is the second cover of $\PSL$, and as such, it is isomorphic to ${\rm SL}(2,\R)$. Through a 
lifting procedure, to each one-parameter group of $\PSL$ one can define 
the corresponding one-parameter group of $\PSL^{(2)}$. Thus corresponding 
to the usually introduced {\it translations} $x\mapsto \tau_x$ and {\it 
rotations} $\alpha\mapsto R_\alpha$, one has the one-parameter groups 
$x\mapsto \tau^{(2)}_x \in \PST$ and $\alpha\mapsto R^{(2)}_\alpha \in 
\PST$; see further details in \cite{Wei06}. Of course, for rotations
one has the simple relation $R^{(2)}_\alpha = R_{\alpha/2}$.
 
Let $S^1_\pm=\{z\in S^1: \, \pm{\rm Im}(z)>0\}$ be the upper and lower 
half-circles, respectively. A ``$2$-translation'' $\tau^{(2)}_x$ leaves 
the points $\pm 1\in S^1$ fixed and hence can be ``cut into two'': it 
can be written as the composition of 
two homeomorphisms localized in the upper and the lower half-circle, respectively. 
However, these homeomorphisms are not smooth and hence one cannot 
substitute them in the positive energy representation $U$ given 
with the conformal net $\A$. Nevertheless, in \cite{Wei06} it was proved that
there exist two self-adjoint operators $K_\pm $ affiliated with 
$\A(S^1_\pm)$ respectively, and bounded from below, such that 
\begin{equation}\label{folbontas}
U(\tau^{(2)}_x) = \rme^{ix K_+}\,  \rme^{ix K_-}, \quad x\in \R.
\end{equation}
 
For every locally normal representation $\pi$ of $\A$, there exist two unique 
strongly continuous projective representations $U_\pi^{(1)} := U_\pi$ 
and $U_\pi^{(2)}$ of $\PSL$ and $\PST$, 
respectively, that $U^{(n)}_\pi(\PSL^{(n)})\subset \pi(C^*(\A))''$ and 
\[
\Ad(U^{(n)}_\pi(g))\circ \pi_I = \pi_{gI}\circ
{\rm Ad}(U(g)),
\]
for $n=1,2$, $g\in \PSL^{(n)}$ and intervals $I\subset S^1$. 
In fact in \cite[Prop.3.4]{Wei06}, apart from existence, it was also shown that 
the representations $U^{(n)}_\pi$ are of positive energy type
and that
\[
U^{(2)}_\pi(R^{(2)}_{\alpha/2})=
U^{(2)}_\pi(R_{\alpha})= U_\pi(R_\alpha),
\]
and
\begin{equation}\label{eq:univ-heat2}
U^{(2)}_\pi(\tau^{2}_x) = \pi_{S^1_+}(\rme^{ix K_+})\pi_{S^1_-} (\rme^{ix K_-}),
\end{equation}
for all $x,\alpha \in \R$. Here of course the equations are meant in 
the projective sense, \ie up to a phase. Note that, by construction, \eqref{eq:univ-heat2} is a product of local elements, hence in 
$\pi(C^*(\A))$.
 
The projective representations $U^{(n)}_\pi$ lift to strongly 
continuous positive energy representations of the universal covering of 
$\PSL$. It is exactly through this lift that the self-adjoint 
generator of rotations $L^\pi_0\geq 0$ is defined and we have that 
\[
\rme^{\rmi \alpha L^\pi_0} = U^{(n)}_\pi(R_\alpha), \quad \alpha\in \R,\; n=1,2, 
\]
in the projective sense.

On the other hand, \cite[Th.3.3]{BDL} applied to the representation $U_\pi^{(2)}$ of $\PST$ provides us with a 
decomposition of $\rme^{-t L_0^\pi}$ into the product of $3$ operators;
using \eqref{eq:univ-heat2}, by which we can further split each 
of those $3$ operators into a product of two, we finally get that,
for all $t>0$,
\begin{align}
\nonumber
\rme^{-t L_0^\pi} = & c_t
\pi_{S^1_+}(\rme^{-{\rm tanh}(t) K_+}) \,
\pi_{S^1_-}(\rme^{-{\rm tanh}(t) K_-}) \, 
\pi_{\rmi S^1_+}(\rme^{-{\rm sinh}(t) \tilde{K}_+}) \times
\\
&\times
\pi_{\rmi S^1_-}(\rme^{-{\rm sinh}(t) \tilde{K}_-})\,
\pi_{S^1_+}(\rme^{-{\rm tanh}(t)K_+}) \,
\pi_{S^1_-}(\rme^{-{\rm tanh}(t)K_-}), 
\end{align}
where $\tilde{K}_\pm = U(R_{\pi/2}) K_\pm U(R_{\pi/2})^*$ are positive 
operators affiliated to $\A(\rmi S^1_\pm)$, the algebras over the left and right half-circle, and the scalar $c_t\in\R_+$ depends only on $\pi$ and $t$. 
\end{proof}   

If $\A$ is a conformal net on $S^1$ with the split property and $\pi$ is a locally normal representation of $\A$ with finite statistical dimension, then by Theorem \ref{piC*A_irr}, $\pi(C^*(\A))=B(\H_\pi)$ so that in particular $\K(\H_\pi) \subset \pi(C^*(\A))$. 
In the case of infinite statistical dimension we are not able to prove that $\pi(C^*(\A))=B(\H_\pi)$ but thanks to Theorem 
\ref{th:univ-heat-inCA} we can still prove that $\K(\H_\pi) \subset \pi(C^*(\A))$ under the mild assumption that $L_0^\pi$ has a 
finite-dimensional eigenspace. Note that, to the best of our knowledge, $L_0^\pi$ has no  infinite-dimensional eigenspace in all studied examples with diffeomorphism symmetry. 

\begin{proposition}\label{propCompacts}
Let $\A$ be a conformal net on $S^1$ and let $\pi$ be an irreducible locally normal representation of $\A$.  Assume that 
$L_0^\pi$ has a finite-dimensional eigenspace. Then $\K(\H_\pi) \subset \pi(C^*(\A))$.  
\end{proposition}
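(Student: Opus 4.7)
The plan is to combine Theorem~\ref{th:univ-heat-inCA}, which places the heat semigroup $\{\rme^{-tL_0^\pi}\}_{t\ge 0}$ inside $\pi(\Alg(\A))$, with a functional calculus argument to exhibit a nonzero finite rank projection inside $\pi(C^*(\A))$, and then to finish using the classical fact that an irreducible $C^*$-subalgebra of $B(\H_\pi)$ whose intersection with $\K(\H_\pi)$ is nonzero must already contain $\K(\H_\pi)$.

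First I would verify that $\spec(L_0^\pi)\subset \lambda_0+\NN$ for some $\lambda_0\in[0,1)$, so that every point of the spectrum---in particular the given eigenvalue $\lambda$---is isolated. The key observation is that $R_{2\pi}$ generates the kernel of the covering $\PSI\to\PSL$, so its image $\dot R_{2\pi}$ is the identity of $\PSL$; the covariance identity $U_\pi(g)\pi_I(a)U_\pi(g)^*=\pi_{\dot gI}(U(\dot g)aU(\dot g)^*)$ then collapses to $U_\pi(R_{2\pi})\pi_I(a)U_\pi(R_{2\pi})^*=\pi_I(a)$, forcing $U_\pi(R_{2\pi})$ to commute with every $\pi_I(\A(I))$ and hence to lie in $\pi(C^*(\A))'=\C\unit$ by irreducibility of $\pi$. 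Writing $U_\pi(R_{2\pi})=\rme^{\rmi\theta}\unit$ and using $U_\pi(R_\alpha)=\rme^{\rmi\alpha L_0^\pi}$ yields $\rme^{2\pi\rmi L_0^\pi}=\rme^{\rmi\theta}\unit$, which together with $L_0^\pi\ge 0$ confines $\spec(L_0^\pi)$ to the coset $\lambda_0+\NN$.

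Next, Stone--Weierstrass on the one point compactification $[0,\infty]$ shows that $\{\rme^{-tx}\}_{t\ge 0}$ separates points and contains $1$, hence generates $C([0,\infty])$ as a unital C*-algebra; under the continuous functional calculus this gives $f(L_0^\pi)\in\pi(C^*(\A))$ for every $f\in C([0,\infty])$. Since $\lambda$ is isolated in a discrete spectrum, I can pick a compactly supported continuous $f$ on $[0,\infty)$ with $f(\lambda)=1$ and $f\equiv 0$ on the rest of $\spec(L_0^\pi)$; then $f(L_0^\pi)$ equals the spectral projection $E_\lambda$ onto the finite dimensional eigenspace of $L_0^\pi$ at $\lambda$, and so $E_\lambda$ is a nonzero finite rank projection lying in $\pi(C^*(\A))$. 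Irreducibility of $\pi$ combined with the standard fact (proved via Kadison's transitivity: first establish $E_\lambda\pi(C^*(\A))E_\lambda=B(E_\lambda\H_\pi)$, then transport the resulting rank one operators outwards by transitivity once more) that an irreducible $C^*$-subalgebra of $B(\H_\pi)$ containing a nonzero finite rank operator must contain $\K(\H_\pi)$ then finishes the proof. The main conceptual step is the spectral discreteness---it rests on recognizing $U_\pi(R_{2\pi})$ as central in $\pi(C^*(\A))$ and hence scalar under irreducibility; everything afterwards is routine Stone--Weierstrass functional calculus and a well known irreducibility argument.
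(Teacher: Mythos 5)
Your proposal is correct and follows essentially the same route as the paper's proof: both deduce discreteness of $\spec(L_0^\pi)$ from the fact that $\rme^{2\pi\rmi L_0^\pi}$ lies in $\pi(C^*(\A))'=\C\unit$ by irreducibility, both use Theorem \ref{th:univ-heat-inCA} and continuous functional calculus to produce the finite-rank spectral projection at the given eigenvalue inside $\pi(C^*(\A))$ (the paper applies the functional calculus to the single operator $\rme^{-L_0^\pi}$ rather than invoking Stone--Weierstrass on the whole semigroup, but this is only a cosmetic difference), and both conclude with the standard fact that an irreducible C*-subalgebra of $B(\H_\pi)$ containing a nonzero compact operator contains all of $\K(\H_\pi)$, which the paper simply cites while you sketch it via Kadison transitivity.
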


\begin{proof} 
Let $\pi$ be an irreducible locally normal representation of $\A$. Since $\rme^{\rmi 2\pi L_0^\pi}\in \pi(C^*(\A))'$
 it must be a multiple of the identity and hence the spectrum of $L_0^\pi$ is contained in $h_\pi + \NN$ where $h_\pi$ is the lowest 
 eigenvalue of $L_0^\pi$. Thus for every $\lambda>0$, the characteristic function $\chi_{\{\lambda \} }$ is continuous on the spectrum
$\sigma(\rme^{- L_0^\pi})$ of $\rme^{-t L_0^\pi}$ since the latter is discrete on $(0,1)$. By Theorem 
\ref{th:univ-heat-inCA}, $\rme^{-L_0^\pi}\in \pi(C^*(\A))$ Accordingly, if $L_0^\pi$ has a finite-dimensional eigenspace corresponding to the eigenvalue $\lambda \geq 0$, then the projection 
$\chi_{\{e^{-\lambda}\}}(\rme^{- L_0^\pi})$ is a nonzero compact operator in $\pi(C^*(\A))$. Hence $\pi(C^*(\A))$ contains all compact operators, see e.g. \cite[IV.1.2.5]{Bla06}.
\end{proof}
\begin{corollary}\label{corCompacts}
Let $\A$ be a conformal net on $S^1$ and let $\pi_0$ be the vacuum representation of $\A$ on the Hilbert space $\H$. Then $\K(\H) \subset \pi_0(C^*(\A))$.  
\end{corollary}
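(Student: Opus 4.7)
The plan is simply to invoke Proposition \ref{propCompacts} for $\pi = \pi_0$, so the only real task is to verify its hypotheses: the vacuum representation $\pi_0$ is locally normal (stated in the preliminaries) and irreducible (it is the standard vacuum sector, with $[\pi_0]$ listed as the vacuum sector), so what remains is to check that $L_0^{\pi_0} = L_0$ has a finite-dimensional eigenspace.

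For this, I would argue that the $0$-eigenspace of $L_0$ is one-dimensional. By the vacuum axiom, $\Omega$ is, up to phase, the unique unit vector in $\H$ invariant under $U(\mathrm{PSL}(2,\R))$, so it suffices to show that any vector $v$ with $L_0 v = 0$ must be Möbius-invariant. This is the standard positive-energy argument: using $[L_0, L_{\pm 1}] = \mp L_{\pm 1}$ and $L_0 \geq 0$, from $L_0 v = 0$ one gets $L_0 (L_1 v) = -L_1 v$, forcing $L_1 v = 0$; then
\[
\|L_{-1} v\|^2 = \langle v, L_1 L_{-1} v\rangle = \langle v, 2 L_0 v\rangle = 0,
\]
so $v$ is annihilated by $L_0$ and $L_{\pm 1}$, hence is invariant under the Möbius group, hence proportional to $\Omega$. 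Thus the eigenspace of $L_0$ at eigenvalue $0$ is spanned by $\Omega$ and in particular is finite-dimensional, so Proposition \ref{propCompacts} applies and yields $\K(\H) \subset \pi_0(C^*(\A))$.

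There is essentially no obstacle here; the content is entirely in Proposition \ref{propCompacts} and the standard uniqueness of the vacuum. The only minor point to be careful about is not to confuse rotation-invariance with full Möbius invariance, which is why the short $L_{\pm 1}$ computation above is needed (alternatively one can just cite the well-known fact that in a positive-energy representation of $\mathrm{PSL}(2,\R)$ the kernel of the conformal Hamiltonian coincides with the space of Möbius-invariant vectors).
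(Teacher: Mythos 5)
Your proposal is correct and follows the paper's proof exactly: both reduce the corollary to Proposition \ref{propCompacts} applied to the irreducible, locally normal vacuum representation, with the finite-dimensional eigenspace supplied by the one-dimensionality of $\ker L_0$ coming from uniqueness of the vacuum vector. The paper simply asserts this last point ``by the definition of conformal nets,'' whereas you spell out the standard $L_{\pm 1}$ computation identifying $\ker L_0$ with the Möbius-invariant vectors; that is a harmless (and welcome) elaboration of the same argument.
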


\begin{proof} 
$\pi_0$ is irreducible and locally normal. Moreover by the definition of conformal nets on $S^1$ the eigenspace of 
$L_0^{\pi_0}=L_0$ corresponding to the eigenvalue $0$ is one-dimensional and the conclusion follows from 
Proposition \ref{propCompacts}.
\end{proof}

\section{The locally normal universal C*-algebra }\label{sec:loc-normal}
 
As already pointed out in the introduction the non-locally normal representations do not apperar to have direct rilevance for CFT . Therefore, it seems natural to ask for a more manageable C*-algebra than 
$C^*(\A)$ taking account only of the locally normal representations. In this section we define the most natural candidate for this purpose and analyze some of its properties. 
 
\begin{definition}\label{def:LN-univC}
The \emph{locally normal universal representation} $(\pi_{\locn},\H_{\locn})$ of $C^*(\A)$ is the direct sum of GNS representations $\pi_\varphi$  over all states $\varphi$ of $C^*(\A)$ for which $\pi_\varphi\circ\iota_I$ is a normal representation of $\A(I)$, for all $I\in\I$. It defines the \emph{locally normal universal C*-algebra} $C^*_{\locn}(\A):=\pi_{\locn}(C^*(\A))$ of $\A$.
\end{definition}
 
\begin{remark}\label{rem:LN-univCA}
\begin{itemize}
  \item[(1)] Clearly, $C^*_{\locn}(\A)\simeq C^*(\A)/\ker(\pi_{\locn})$. Consequently, $C^*_{\locn}(\A)$ satisfies the following 
  \emph{universal property} for locally normal representations: for every locally normal representation $\pi$ of $\A$ on $\H_\pi$, there is a unique representation $\hat{\pi}:C^*_{\locn}(\A)\ra B(\H_\pi)$ such that
\[
\pi_I = \hat{\pi}\circ \iota_I,\quad I\in \I,
\]
where, with a slight abuse of notation, we have also denoted by $\iota_I$ the natural inclusion of $\A(I)$ in $C^*_{\locn}(\A)$. 
Accordingly, for most purposes, if one deals only with locally normal representations
one can replace $C^*(\A)$ with $C^*_{\locn}(\A)$.
As before then, we shall drop the symbol `` $\hat{\cdot}$ " and identify $\A(I)$ with its image in $C^*_{\locn}(\A)$.
\item[(2)] If $\A$ is completely rational, then according to \cite[Cor.39]{KLM} every locally normal representation of $\A$ is equivalent to a direct sum of irreducible locally normal representations with finite  statistical dimension. Considering then the \emph{reduced locally normal universal representation} $(\pi_{\red},\H_{\red})$ of $C^*(\A)$, namely the direct sum of a maximal family of mutually inequivalent irreducible locally normal representations of $C^*(\A)$, we see that $\pi_{\red}$ and $\pi_{\locn}$ are
quasi-equivalent so that

\[
  \pi_{\red}(C^*(\A)) \simeq \pi_{\locn}(C^*(\A)).
\]
Moreover, every locally normal representation of $C^*(\A)$ is quasi-equivalent to a subrepresentation of $\pi_{\red}$.
The reason of considering this reduced representation, in which we get rid of an infinite multiplicity, shall become clear in Theorem \ref{th:univ-K-inCA}. We write $C^*_{\red}(\A) := \pi_{\red}(C^*(\A))$.

\end{itemize}
\end{remark}

Based on the new concepts introduced in the present setting, we can reformulate the results from Section 
\ref{sec:universal} as follows:
 
\begin{theorem}\label{th:LN-main} Let $\A$ be a conformal net on $S^1$. Then the following hold. 
\begin{itemize}
\item[$(1)$] $C^*_{\locn}(\A)$ has all the properties of $C^*(\A)$ in Proposition \ref{prop:U-propsCA}
\item[$(2)$] The elements $\rme^{-t L_0^{\pi_{\locn}}}$, with $t>0$, lie in $C^*_{\locn}(\A)$.
\item[$(3)$] Suppose $\A$ is completely rational, with $n$ sectors. Then $C^*_{\locn}(\A)$ is weakly closed,
\[
C^*_{\locn}(\A) \simeq B(\H)^{\oplus n}, \quad \ZZ(C^*_{\locn}(\A)) \simeq \C^n.
\]
\end{itemize}
\end{theorem}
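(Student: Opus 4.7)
The plan is to prove the three claims by reducing each to a result already established in Section~\ref{sec:universal}, applied to the distinguished locally normal representation $\pi_{\locn}$ (or to $\pi_{\red}$ via quasi-equivalence).

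For (1), the key observation is that $\pi_{\locn}$ is faithful on each local algebra $\iota_I(\A(I))$, because $\A(I)$ is a simple C*-algebra (type III factor) and $\pi_{\locn}$ contains for instance the vacuum representation as a subrepresentation. Hence $C^*_{\locn}(\A)$ contains faithful copies of each type III factor $\A(I)$, from which unitality, nonseparability, proper infiniteness (thus stable rank $\infty$ by V.3.1.11 in \cite{Bla06}), generation by projections, and non-exactness (by the same subalgebra argument as in Proposition~\ref{prop:U-propsCA}, using that $\A(I)$ contains a type I$_\infty$ subfactor) all transfer verbatim. Non-simplicity and non-pure-infiniteness follow from Corollary~\ref{corCompacts}: since $\pi_0$ factors through $\pi_{\locn}$ as a $*$-homomorphism $C^*_{\locn}(\A)\to B(\H)$, the preimage of $\K(\H)$ is a proper nonzero closed two-sided ideal (proper because $\K(\H)$ meets the type III image $\A(I)$ trivially), and the finite projections of $\K(\H)$ sit in $\pi_0(C^*_{\locn}(\A))$, precluding pure infiniteness via \cite[V.2.2.22, V.2.2.24]{Bla06}.

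For (2), I would just apply Theorem~\ref{th:univ-heat-inCA} to the locally normal representation $\pi=\pi_{\locn}$ itself (it is locally normal by construction, as a direct sum of GNS representations of locally normal states, hence a direct sum of separable representations). That theorem gives $\rme^{-tL_0^{\pi_{\locn}}}\in\pi_{\locn}(\Alg(\A))\subset C^*_{\locn}(\A)$ for every $t\geq 0$.

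For (3), the plan is to combine Remark~\ref{rem:LN-univCA}(2) with Theorem~\ref{piC*A_rational}. By \cite[Cor.~39]{KLM}, every locally normal representation of a completely rational net decomposes into irreducibles of finite statistical dimension, and there are exactly $n$ equivalence classes of these; hence $\pi_{\red}$ and $\pi_{\locn}$ are quasi-equivalent, so their weak closures are canonically isomorphic and
\[
C^*_{\red}(\A) \;\simeq\; C^*_{\locn}(\A).
\]
Applying Theorem~\ref{piC*A_rational} to $\pi_{\locn}$ yields $C^*_{\locn}(\A)=\pi_{\locn}(C^*(\A))''$, a finite direct sum of countably decomposable infinite-dimensional type I factors. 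To identify the number of summands with $n$, I would compute the weak closure through $\pi_{\red}=\bigoplus_{i=1}^n \pi_i$: since each $\pi_i$ is irreducible, $\pi_i(C^*(\A))''=B(\H_{\pi_i})$, and the inequivalence of the $\pi_i$ together with quasi-equivalence of $\pi_{\red}$ and $\pi_{\locn}$ forces
\[
\pi_{\locn}(C^*(\A))''\;\simeq\;\pi_{\red}(C^*(\A))''\;=\;\bigoplus_{i=1}^n B(\H_{\pi_i})\;\simeq\;B(\H)^{\oplus n},
\]
with center $\C^n$, as desired.

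The main obstacle is the identification of the number of type I factors with $n$ in part (3): the easy inclusion $\pi_{\red}(C^*(\A))\subset\bigoplus_i B(\H_{\pi_i})$ does not immediately surject, and I expect to really use quasi-equivalence of $\pi_{\red}$ with $\pi_{\locn}$ (so that inequivalent irreducible summands give distinct minimal central projections in the weak closure) combined with the a priori finite-direct-sum-of-type-I-factors structure provided by Theorem~\ref{piC*A_rational}. Once that structure is in place, an isomorphism of weakly closed algebras is forced by matching central supports on both sides.
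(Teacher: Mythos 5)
Your proposal is correct and follows essentially the same route as the paper: part (1) by rerunning the arguments of Proposition \ref{prop:U-propsCA} inside $C^*_{\locn}(\A)$ (using that $\pi_{\locn}$ is faithful on each simple local algebra $\A(I)$ and that $\pi_0$ factors through $\pi_{\locn}$), part (2) by applying Theorem \ref{th:univ-heat-inCA} to the locally normal representation $\pi_{\locn}$, and part (3) by combining Theorem \ref{piC*A_rational} with the quasi-equivalence of $\pi_{\locn}$ and $\pi_{\red}=\bigoplus_{i=1}^n\pi_i$ from Remark \ref{rem:LN-univCA}(2). The ``obstacle'' you flag in (3) is in fact immediate, since mutual inequivalence of the irreducible $\pi_i$ gives $\pi_{\red}(C^*(\A))''=\bigoplus_i B(\H_{\pi_i})$ by Schur's lemma, and Theorem \ref{piC*A_rational} identifies the image with its weak closure.
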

 
\begin{proof}
(1) follows by the same arguments given in the proof of Proposition \ref{prop:U-propsCA}.
 
(2) As a sum of locally normal representation, $\pi_{\locn}$ is locally normal again, so the statement follows from Theorem 
\ref{th:univ-heat-inCA}.
 
(3) From the definition of $C^*_{\locn}(\A)$ and Theorem \ref{piC*A_rational} we know that $C^*_{\locn}(\A)$ is weakly closed and that
\[
C^*_{\locn}(\A) \simeq C^*_{\red}(\A) \simeq \Big(\bigoplus_{[\pi]\in[\Delta_{\irr}]}\pi(C^*(\A)) \Big) 
\simeq B(\H)^{\oplus n}.
\]
\end{proof}

\begin{proposition}\label{th:univ-K-inCA}
Let $\A$ be a completely rational conformal net on $S^1$ with $n$ sectors. Then the following hold.
\begin{itemize}
\item[$(1)$] Every irreducible locally normal representation of $C^*(\A)$ contains the compact operators. The restriction of representations of $C^*_{\locn}(\A) \simeq C^*_{\red}(\A)$
to the subalgebra of compacts 
\[
  \KK:=\K(\H_{\red})\cap C^*_{\red}(\A)
\]
gives rise to an isomorphism from the category of locally normal (unital) representations of $C^*(\A)$ onto the category of nondegenerate representations of $\KK$. In particular there is a one-to one correspondence between 
equivalence classes of nondegenerate irreducible representations of $\KK$ and sectors of $\A$.
\item[$(2)$] For every localized endomorphism $\rho$ of $C^*(\A)$, there exists a unique normal faithful endomorphism $\hat{\rho}$ of  $C^*_{\red}(\A)$ such that $\hat{\rho}(\pi_{\red}(x)) = \pi_{\red}(\rho(x))$
for all $x \in C^*(\A)$. Moreover, if $\rho$ has finite statistical dimension then $\hat{\rho}(\KK) \subset \KK$. 
\end{itemize}
\end{proposition}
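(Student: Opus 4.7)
The approach will be to exploit the concrete identification $C^*_{\red}(\A) \simeq \bigoplus_{i=1}^n B(\H_i)$ embedded block-diagonally in $B(\H_{\red})$, with $\H_{\red} = \bigoplus_i \H_i$, provided by Theorem~\ref{th:LN-main}(3), together with \cite[Cor.~39]{KLM}, which decomposes every locally normal representation of $\A$ into irreducible pieces of finite statistical dimension drawn from the $n$ sectors.

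For part~(1), the inclusion $\K(\H_\pi) \subset \pi(C^*(\A))$ for every irreducible locally normal $\pi$ is immediate from Theorem~\ref{piC*A_irr}, since there $\pi(C^*(\A)) = B(\H_\pi)$. Intersecting the block-diagonal $C^*_{\red}(\A)$ with $\K(\H_{\red})$ leaves only block-wise compacts, so $\KK \simeq \bigoplus_{i=1}^n \K(\H_i)$. The category equivalence then follows from the universal property of $C^*_{\locn}(\A)$ combined with the standard fact that nondegenerate representations of $\bigoplus_i \K(\H_i)$ correspond bijectively to normal representations of $\bigoplus_i B(\H_i)$: restriction is handled by an approximate-unit argument, since a bounded approximate unit of $\KK$ converges $\sigma$-strongly to the identity of $C^*_{\red}(\A)$, and nondegenerate representations of each $\K(\H_i)$ admit unique normal extensions to $B(\H_i)$, which glue to give the inverse functor. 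Intertwiners match under restriction, and the $n$ minimal ideals $\K(\H_i) \subset \KK$ yield exactly the $n$ inequivalent irreducible nondegenerate representations, corresponding bijectively to the sectors of $\A$.

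For part~(2), the plan is first to establish $\rho(\ker \pi_{\red}) \subset \ker \pi_{\red}$ so that the desired $\hat{\rho}$ exists uniquely. Since $\rho$ is localized in some $I_0$, it restricts on each $\iota_J(\A(J))$, $J \supset I_0$, to a normal endomorphism of $\A(J)$ (a standard DHR property, inherited from the normality of $\pi_0 \circ \rho$ on local algebras); hence $\pi_{\red} \circ \rho$ is locally normal and vanishes on $\ker \pi_{\locn} = \ker \pi_{\red}$. Applying \cite[Cor.~39]{KLM} to $\pi_{\red} \circ \rho$ yields a decomposition $\pi_{\red} \circ \rho \simeq \bigoplus_{j=1}^n m_j \pi_j$ with $m_j = \sum_i N_{i,[\rho]}^j$; conjugating by the implementing unitary, $\hat{\rho}$ becomes the map $\bigoplus_j a_j \mapsto \bigoplus_j m_j \cdot a_j$ on $\bigoplus_j B(\H_j)$, which is manifestly normal as a finite sum of amplified coordinate projections. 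For faithfulness one needs $m_j \geq 1$ for every $j$: rewriting via \eqref{eq:prelim-fusion3} as $m_j = \sum_i N_{j,[\bar{\rho}]}^i$, the locally normal representation $\pi_j \circ \bar{\rho}$ (with $\bar{\rho}$ a conjugate of $\rho$) is nonzero and therefore contains at least one irreducible summand $\pi_i$, giving $m_j \geq 1$.

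Finally, if $\rho$ has finite statistical dimension $d(\rho)$, then $\sum_j m_j d_j = d(\rho) \sum_i d_i < \infty$ forces each $m_j$ to be finite, so the amplifications appearing in the description of $\hat{\rho}$ are finite and compactness is preserved block by block, yielding $\hat{\rho}(\KK) \subset \K(\H_{\red}) \cap C^*_{\red}(\A) = \KK$. The most delicate step of the argument is the preliminary verification that $\rho$ is locally normal on the local subalgebras of $C^*(\A)$: without it $\pi_{\red} \circ \rho$ would not be locally normal and $\hat{\rho}$ would fail to be defined on $C^*_{\red}(\A)$; once this is in hand, everything reduces to elementary bookkeeping with block-diagonal operators and the finite fusion multiplicities.
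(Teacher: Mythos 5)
Your part (1) and the existence/normality/compactness portions of part (2) are sound and close in spirit to the paper's own proof: the paper likewise reduces everything to the identifications $C^*_{\red}(\A)\simeq B(\H)^{\oplus n}$ and $\KK\simeq \K^{\oplus n}$, obtains $\hat{\rho}$ from the quasi-containment of the locally normal representation $\pi_{\red}\circ\rho$ in $\pi_{\red}$, and deduces $\hat{\rho}(\KK)\subset\KK$ from the finite-dimensionality of $\pi_{\red}(\rho(C^*(\A)))'$, which is just another way of saying that your multiplicities $m_j$ are finite. Your explicit amplification picture $\bigoplus_j a_j \mapsto \bigoplus_j \bigl(a_j\otimes \unit_{m_j}\bigr)$ is a legitimate, more computational substitute for the paper's quasi-equivalence argument, and your preliminary check that $\pi_{\red}\circ\rho$ is locally normal (hence annihilates $\ker\pi_{\red}=\ker\pi_{\locn}$) is exactly the fact already recorded in Remark \ref{rem:CFT-univC} (3).

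The genuine gap is in the faithfulness argument of part (2). The statement asserts faithfulness of $\hat{\rho}$ for \emph{every} localized endomorphism $\rho$; the hypothesis $d(\rho)<+\infty$ enters only for the final claim $\hat{\rho}(\KK)\subset\KK$. Even for a completely rational net a localized endomorphism may have infinite statistical dimension (e.g. one with $\pi_0\circ\rho$ an infinite multiple of $\pi_0$), and then the conjugate $\bar{\rho}$ you invoke does not exist, nor do the fusion coefficients in your formula $m_j=\sum_i N_{i,[\rho]}^j$ (the $m_j$ may be infinite, which is harmless for normality but leaves your count ``$m_j=\sum_i N_{j,[\bar{\rho}]}^i\geq 1$'' without meaning). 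The repair is the one the paper uses: by \cite[Cor.~39]{KLM} the representation $\pi_0\circ\rho$ always contains an irreducible summand $\pi_0\circ\sigma$ with $\sigma$ localized and $d(\sigma)<+\infty$; since the corresponding subprojection lies in a local algebra by Haag duality, $\rho$ dominates $\sigma$ as an endomorphism and hence $[\pi\circ\rho]$ dominates $[\pi\circ\sigma]$ for every locally normal $\pi$. Running your Frobenius-reciprocity count with $\bar{\sigma}$ in place of $\bar{\rho}$ then gives $m_j\geq \sum_i N_{j,[\bar{\sigma}]}^i\geq 1$ for every $j$, and with that substitution your argument covers the stated generality.
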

 
\begin{proof}
(1) From Theorem \ref{th:LN-main} (2), we have $\KK\simeq \K^{\oplus n}$ since $\H_{\red}$ is a finite direct sum of separable Hilbert spaces. As a consequence, every nondegenerate representation of $\KK$ has a (unique) normal extension to $\KK''= C^*_{\red}(\A) \simeq B(\H)^{ \oplus n}$, and the claim follows.
 
(2) The locally normal representation $\pi_{\red}\circ \rho$ is quasi-equivalent to a subrepresentation of $\pi_{\red}$. Accordingly, there is a unique normal representation $\hat{\rho}$ (corresponding through the quasi-equivalence to the restriction to the subrepresentation subspace) of $C^*_{\red}(\A)$ on $\H_{\red}$ 
satisfying 
\[
 \hat{\rho}(\pi_{\red}(x))= \pi_{\red}(\rho(x)), \quad x\in C^*(\A),
\]
and consequently $\hat{\rho}(C^*_{\red}(\A)) \subset C^*_{\red}(\A)$. 

We now show that $\hat{\rho}$ is injective. Since $\A$ is completely rational, the localized representation
$\pi_0 \circ \rho$ has a direct summand $\pi_0 \circ \sigma$ with $\sigma$ a localized covariant endomorphism of $C^*(\A)$ with finite statistical dimension. Let $\bar{\sigma}$ be the corresponding conjugate endomorphism.  
Then $\pi_{\red}\circ \bar{\sigma}\rho$ contains a subrepresentation unitarily equivalent to $\pi_{\red}$. On the other hand, 
the locally normal representation $\pi_{\red}\circ \bar{\sigma}$ is quasi-equivalent to a subrepresentation of $\pi_{\red}$ 
so that $\pi_{\red}\circ \bar{\sigma}\rho$ is quasi-equivalent to a subrepresentation of  $\pi_{\red}\circ \rho$. Accordingly 
$\pi_{\red}$ is quasi-equivalent to a subrepresentation of $\pi_{\red}\circ \rho$. As a consequence the identical representation of $C^*_{\red}(\A)$ on $\H_{\red}$ is quasi-equivalent to a subrepresentation of the representation
$\hat{\rho}$ and hence the latter is faithful.

Let $e\in C^*_{\red}(\A)$ be any orthogonal projection with finite-dimensional range $e\H_{\red}$. Then 
$e$ is a finite projection in the von Neumann algebra $C^*_{\red}(\A)$ and hence $\hat{\rho}(e)$ is finite in 
$\hat{\rho}(C^*_{\red}(\A))$. If $\rho$ has finite  statistical dimension then $\hat{\rho}(\pi_{\red}(C^*(\A)))'=
\pi_{\red}(\rho(C^*(\A)))'$ is finite-dimensional and hence $\hat{\rho}(e)$ has finite-dimensional range. It follows that 
\[
\hat{\rho}(\KK) \subset \pi_{\red}(C^*(\A)) \cap \K(\H_{\red}) = \KK.
\]
\end{proof}

The nondegenerate irreducible representations of $\KK$ are in one-to-one correspondence with the sectors of $\A$; moreover, while $C^*_{\locn}(\A)$ cannot be a good candidate for a K-theoretical description of the superselection structure of chiral CFTs (see Theorem \ref{K=0}), $\KK$ will turn out in Section \ref{sec:K-th} to be fine.
In contrast to its nice C*-algebraic properties, however, $\KK$ completely forgets the net structure of $\A$, in the sense that it has zero intersection with any local algebra $\A(I)$, $I\in \I$.

\section{Superselection sectors and K-theory}\label{sec:K-th}

\subsection*{K-theory basics}

Let $B$ be a $*$-algebra. We say that $p \in B$ is a projection if it is a self-adjoint idempotent, i.e. $p=p^*$ and $p^2=p$. 
We say that two projections $p, q \in B$ are (Murray - von Neumann) equivalent if there exists a $w \in A$   
such that $w^*w =p$ and $ww^* =q$. If this is the case we write $p \sim q$. We denote by $\operatorname{Proj}(B)$ the set of projections of $B$ and by $V_0(B)$ the set $\operatorname{Proj}(B)/\sim$ of equivalence classes of projections of $B$. 

Let $A$ be a \emph{stably unital} C*-algebra, \ie a C*-algebra with an approximate unit of projections; denote its unitization by $A^+$ if it is not unital.  While details (and the appropriate definitions when $A$ is not stably unital) can be found in \cite{Bl} or \cite[Sect.V.1]{Bla06}, we provide here only the main
 
\begin{definition}\label{def:prelim-K}
\begin{itemize}
\item[$(K_0)$] Let $\Mat_r(A)$ be the  C*-algebra of $r\times r$ matrices over $A$, $r\in \N$, and let $\Mat_\infty(A)$ denote the 
$*$-algebra of infinite matrices over $A$ with only finitely many nonzero entries. The 
maps $x\mapsto {diag}(x,0)$ define natural embeddings $\Mat_r(A) \ra \Mat_{r+1}(A)\ra ...\ra \Mat_\infty(A)$, giving rise to a directed system of C*-algebras. Let $V(A)  := V_0(\Mat_\infty(A))$. There is a binary operation 
$$(p_1,p_2)\in \operatorname{Proj}\left(\Mat_{r_1}(A)\right)\times \operatorname{Proj}\left(\Mat_{r_2}(A)\right) \mapsto  diag(p_1, p_2) \in 
\operatorname{Proj}\left(\Mat_{r_1+r_2}(A)\right) ,$$ which turns $V(A)$ into an abelian semigroup. Then the 
\emph{$K_0$-group of $A$} is defined as
\[
  K_0(A) := \textrm{Grothendieck group of } V(A),
\]
where the Grothendieck group of an arbitrary additive semigroup $H$ is the group of formal differences of elements of 
$H$, i.e.
\[
  (H\times H) /\{(h_1,h_2) \sim_{H\times H} (g_1,g_2)\; \Leftrightarrow \; (\exists k\in H) h_1+g_2 +k = h_2 + g_1 +k \}.
\]
We write $[p]$ for the element in $K_0(A)$ induced by a projection $p\in A$.
\item[$(K_1)$] Let $U_r(A)$ denote the set of unitary elements in $\Mat_r(A^+)$ of the form $\unit_r+x$ with $x\in\Mat_r(A)$, and let $U_r(A)_0$ be the connected component of $\unit_r$ in $U_r(A)$.  With the diagonal inclusion $u\in  U_r(A) \mapsto u\oplus 1 \in  U_{r+1}(A)$, this gives a directed system, and with the operation $(u_1,u_2)\in U_{r_1}(A)\times U_{r_2}(A) \mapsto u_1\oplus u_2\in U_{r_1+r_2}(A)$, the union of all $U_r(A)$ becomes a group. Then the \emph{$K_1$-group of $A$} is defined as the direct limit
\[
  K_1(A) := (\lim_{\ra} U_r(A))/ (\lim_{\ra}U_r(A)_0).
\]
\end{itemize}
\end{definition}
 
\begin{remark}\label{MrA} Let $w \in \Mat_\infty (A)$ be such that $w^*w=p$ and $ww^*=q$, where $p, q \in \Mat_r(A)$ are 
projections. Then $w=qwp \in \Mat_r(A)$. Hence, if two projections in $\Mat_r(A)$ are equivalent in 
$\Mat_\infty(A)$, then they are equivalent in $\Mat_r(A)$. Accordingly, for every positive integer 
$r$, there is an embedding of the set $V_0(\Mat_r(A))$ of equivalence classes of projections of $\Mat_r(A)$ into 
$V(A)=V_0(\Mat_\infty(A))$ giving rise to natural maps $V_0(\Mat_r(A)) \to K_0(A)$. In particular, the embedding of 
$A$ in $\Mat_\infty(A)$ gives rise to a natural map from $V_0(A)$ into $K_0(A)$. Finally, one can use 
$A\otimes \K$ instead of $\Mat_\infty(A)$. In fact $V(A)=V_0(A \otimes K)$, cf. \cite[Sect. 5.1]{Bl}.
\end{remark}

K-theory is a functor from the category of stably unital C*-algebras to abelian groups. Moreover, it can be regarded as a cohomology theory on C*-algebras; in particular, it satisfies the additivity property, namely
\[
K_*(A\oplus B) = K_*(A) \oplus K_*(B),
\]
for C*-algebras $A,B$. It defines an important invariant for the description and classification of C*-algebras.

Every $*$-homomorphism $\phi : A \to B$ induces a well-defined group homomorphism \linebreak 
$\phi_*: K_*(A) \to K_*(B)$, called the push-forward, with the property $\phi_*([p])=[\phi(p)]$, for every 
$p \in \operatorname{Proj}(A)$. It is compatible with composition, namely $(\phi \psi)_*=\phi_*\psi_*$. 

One might be tempted to apply this property of $*$-homomorphisms in order to achieve a K-theoretic interpretation of the representation theory of conformal nets on $S^1$ and its localized endomorphisms. However, since infinite factors have trivial K-theory 
\cite[5.3.2. (b), 8.1.2 (b)]{Bl},  as a straightforward consequence of additivity of K-theory and Theorem \ref{piC*A_rational} we have

\begin{theorem}\label{K=0}
Let $\A$ be a completely rational conformal net on $S^1$. Then \linebreak $K_*(\pi(C^*(\A))) = 0$, for every locally normal representation $\pi$ of $\A$. In particular, \linebreak $K_*(C^*_{\locn}(\A)) = 0$.
\end{theorem}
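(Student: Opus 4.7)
The plan is to reduce the statement directly to the structural classification established in Theorem \ref{piC*A_rational} together with the well-known triviality of the K-theory of type $\mathrm{I}_\infty$ factors.

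First I would invoke Theorem \ref{piC*A_rational}: for a completely rational conformal net $\A$ on $S^1$ and any locally normal representation $\pi$ of $\A$, the image $\pi(C^*(\A))$ is a finite direct sum of countably decomposable infinite-dimensional type $\mathrm{I}$ factors, hence isomorphic to $\bigoplus_{i=1}^m B(\H_i)$ with each $\H_i$ a separable infinite-dimensional Hilbert space and $m$ bounded by the number $n$ of sectors. This reduces the computation to the K-theory of a finite direct sum of copies of $B(\H)$.

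Next, by additivity of K-theory as a functor on unital $C^*$-algebras, one has $K_\ast(\pi(C^*(\A))) = \bigoplus_{i=1}^m K_\ast(B(\H_i))$. Then I would appeal to the standard fact that $K_0(B(\H)) = 0 = K_1(B(\H))$ for a separable infinite-dimensional Hilbert space $\H$ (this is \cite[5.3.2(b), 8.1.2(b)]{Bl}; intuitively, any projection in $M_\infty(B(\H))$ is absorbed via an Eilenberg-swindle-type argument exploiting the existence of a halving isometry, and every unitary in $B(\H)$ is in the path component of $\unit$). Combining these yields $K_\ast(\pi(C^*(\A)))=0$.

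For the final clause about $C^*_{\locn}(\A)$, the simplest route is to note that $C^*_{\locn}(\A) = \pi_{\locn}(C^*(\A))$ with $\pi_{\locn}$ itself a locally normal representation (being a direct sum of locally normal GNS representations, \cf Theorem \ref{th:LN-main}(2)), so the previous argument applies verbatim; alternatively one may quote Theorem \ref{th:LN-main}(3) to identify $C^*_{\locn}(\A) \simeq B(\H)^{\oplus n}$ directly. There is no genuine obstacle in the argument: the content has already been absorbed into Theorem \ref{piC*A_rational}, and the remaining ingredients are the functoriality and additivity of K-theory and the vanishing of $K_\ast(B(\H))$, all of which are standard.
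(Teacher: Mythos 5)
Your proposal is correct and coincides with the paper's own argument: the theorem is stated there as an immediate consequence of Theorem \ref{piC*A_rational}, additivity of K-theory, and the triviality of $K_*$ for infinite factors \cite[5.3.2(b), 8.1.2(b)]{Bl}. The only addition you make is spelling out why the final clause about $C^*_{\locn}(\A)$ follows (via local normality of $\pi_{\locn}$ or Theorem \ref{th:LN-main}(3)), which is a harmless elaboration of the same route.
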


We shall see, however, that one can overcome these problems by considering the  C*-subalgebra 
$\KK\subset C^*_{\red}(\A)$ introduced in Proposition \ref{th:univ-K-inCA}. 
The purpose of the following subsection is to show that we get indeed a nontrivial semiring action of the fusion ring $\Ring$ (\cf Remark \ref{rem:CFT-univC} (4) for definition and some properties) on $K_*(\KK)$.

\subsection*{Fusion semiring action on $K_0(\KK)$}

In contrast to the algebra $C^*_{\locn}(\A)$, which has trivial K-theory, we have
\begin{equation}\label{eq:KK-K}
K_0(\KK)\simeq \Z^n, \quad K_1(\KK)=0.
\end{equation}

This follows from additivity because $\KK\simeq \K^{\oplus n}$ and 
\[
K_0(\K)=\Z,\quad K_1(\K)=0,
\]
actually we have $V_0(\K)=V(\K) = \N_0$ \cf \cite[V.1.1.10\&1.16\&2.3\&2.7]{Bla06}. 

Note that, in the identification $\N_0 = V(\K)$,  
$1 \in \N_0$ must correspond to the class of an arbitrary minimal projection in $\Mat_\infty(\K)$ and hence to the class of 
an arbitrary minimal projection in $\K$. Accordingly $K_0(\K)$ is generated by $V_0(\K)$ through the embedding 
(in fact surjective) $V_0(\K) \subset V(\K)$, cf. Remark \ref{MrA}. Similarly, $V_0(\KK)$ generates $K_0(\KK)$. Actually 
the $n$ equivalence classes in $V_0(\KK)$ corresponding to the minimal (one-dimensional) projections in $\KK$ generate $K_0(\KK)$.

By the above facts nontrivial actions on $K_0(\KK)$ are not {\it a priori} excluded. In fact, we have

\begin{theorem}\label{th:K-sectors}
Let $\A$ be a completely rational net on $S^1$. Then 
the push-forward of \linebreak $*$-homomorphisms of $\KK$ gives rise to a faithful semiring action of the fusion semiring $\Ring$ on $K_0(\KK)$, 
namely an injective semiring homomorphism $\eta:\Ring\ra \End(K_0(\KK))$ satisfying $\eta_{[\rho]} = (\hat{\rho}|_{\KK})_*$ for every 
localized covariant endomorphism $\rho$ of $C^*(\A)$. 
It corresponds to the regular representation of the fusion algebra associated to $\Ring$. 
\end{theorem}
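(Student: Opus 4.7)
The plan is to define $\eta_{[\rho]} := (\hat\rho|_{\KK})_*$, verify it yields a semiring homomorphism, compute its matrix on an explicit basis of $K_0(\KK)$ using the fusion rules, and then recognize the result as the left regular representation of the fusion algebra, from which faithfulness is immediate.

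First I would address well-definedness and the semiring axioms. If $\rho' = \Ad(u)\circ\rho$ for a unitary $u\in C^*(\A)$, Proposition \ref{th:univ-K-inCA} gives $\hat\rho' = \Ad(\pi_{\red}(u))\circ\hat\rho$; since $C^*_{\red}(\A) \simeq \bigoplus_k B(\H_k)$ is the multiplier algebra of $\KK$, conjugation by the unitary multiplier $\pi_{\red}(u)$ acts trivially on $K_0(\KK)$, so $\eta_{[\rho]}$ depends only on $[\rho]$. Multiplicativity follows from the chain rule $\widehat{\rho_1\rho_2} = \hat\rho_1\circ\hat\rho_2$, which one checks on the dense subalgebra $\pi_{\red}(C^*(\A)) = C^*_{\red}(\A)$ and extends by normality, combined with functoriality of $K_0$. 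For additivity, choose Cuntz isometries $v_1,v_2$ inside a single local type III factor $\A(I_0)$ (so $v_i^*v_j=\delta_{ij}\unit$ and $v_1v_1^*+v_2v_2^*=\unit$) and set $\rho = v_1\rho_1(\cdot)v_1^*+v_2\rho_2(\cdot)v_2^*$, realizing $[\rho]=[\rho_1]+[\rho_2]$; then for any projection $p\in\KK$ the images $\pi_{\red}(v_i)\hat\rho_i(p)\pi_{\red}(v_i)^*$ are mutually orthogonal projections in $\KK$ Murray--von Neumann equivalent to $\hat\rho_i(p)$, giving $\eta_{[\rho]}([p]) = \eta_{[\rho_1]}([p]) + \eta_{[\rho_2]}([p])$.

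Next comes the main computation. Identify $K_0(\KK)\simeq\Z^n$ via the basis $e_k=[p_k]$ with $p_k$ a rank-one projection in the $k$-th summand $\K(\H_k)$ of $\KK\simeq\bigoplus_k\K(\H_k)$, so $e_k$ corresponds to the sector $s_k=[\pi_k]$. Fix a covariant localized endomorphism $\rho$ with $[\pi_0\circ\rho] = s_l$. Theorem \ref{piC*A_rational} applied to $\pi_{\red}$ yields $\pi_{\red}(C^*(\A)) = \bigoplus_j B(\H_j)$, so I can lift $p_k$ to an $x\in C^*(\A)$ with $\pi_j(x) = \delta_{jk}\, p_k^{(\H_k)}$ for every $j$. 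By the defining property of $\hat\rho$,
\[
\hat\rho(p_k) = \pi_{\red}(\rho(x)) = \bigoplus_j(\pi_j\circ\rho)(x).
\]
The fusion decomposition $\pi_j\circ\rho \simeq \bigoplus_m N_{jl}^m\,\pi_m$ identifies $(\pi_j\circ\rho)(x)$ with $p_k^{(\H_k)}\otimes\unit_{N_{jl}^k}$ (only the $m=k$ block survives), a rank-$N_{jl}^k$ projection in $B(\H_j)$. Hence $\eta_{s_l}(e_k) = \sum_j N_{jl}^k\, e_j$.

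Finally, I would recognize this action as the regular representation. Define the $\Z$-module isomorphism $\phi:K_0(\KK)\to\Z\Ring$ by $\phi(e_k) = s_{\bar k}$. Then $\phi\circ\eta_{s_l} = L_{s_l}\circ\phi$ (with $L_{s_l}$ left multiplication by $s_l$) reduces to the identity $N_{jl}^k = N_{l\bar k}^{\bar j}$, which is a direct instance of \eqref{eq:prelim-fusion3} (with $(i,j,k)\mapsto (j,l,\bar k)$). Thus $\eta$ is equivalent to the left regular representation of the fusion algebra $\Z\Ring$, which is faithful for any unital ring, and therefore $\eta$ itself is injective. The main obstacle is the fusion-rule computation in the previous paragraph: since $p_k$ is not in the image of any single local algebra, one must exploit surjectivity of $\pi_{\red}$ onto the full direct sum $\bigoplus_j B(\H_j)$ to lift it, and then carefully track the fusion multiplicities block-by-block in order to extract the rank of each component of $\hat\rho(p_k)$; once this is done, the identification with the regular representation is a short algebraic manipulation using only commutativity and the conjugation symmetries of the $N_{ij}^k$.
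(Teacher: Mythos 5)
Your argument is correct, and its first three steps (well-definedness of $\eta_{[\rho]}=(\hat\rho|_{\KK})_*$, additivity via Cuntz isometries in $\A(I_0)$, multiplicativity via functoriality) match the paper's almost verbatim --- your multiplier-algebra phrasing of well-definedness is the same partial-isometry argument the paper writes out explicitly. Where you genuinely diverge is the final step. The paper computes only the image of the vacuum generator: using central supports in the enveloping von Neumann algebra and the multiplicity-one property $N_{ik}^{1}=\delta_{i,\bar k}$ of conjugates, it identifies $\hat\rho_k(p_1)$ as a minimal projection under $z_{\bar k}$, declares $e_k:=\eta_{[\rho_k]}(e_1)$ to be a (permuted) basis, and then gets the matrix $\eta_{[\rho_i]}(e_j)=\sum_k N_{ij}^k e_k$ for free from multiplicativity and additivity, so the regular-representation statement is built into the choice of basis. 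You instead compute the full matrix of $\eta_{[\rho]}$ in the canonical basis by lifting each rank-one projection $p_k$ through the surjection $\pi_{\red}(C^*(\A))=\bigoplus_j B(\H_j)$ furnished by Theorem \ref{piC*A_rational} and reading off the rank $N_{jl}^k$ of $(\pi_j\circ\rho)(x)$ from $[\pi_j\circ\rho]=s_js_l$; you then need the symmetry $N_{jl}^k=N_{l\bar k}^{\bar j}$ from \eqref{eq:prelim-fusion3} to recognize left multiplication after the base change $e_k\mapsto s_{\bar k}$. Your route buys an explicit formula for $\eta$ in the canonical basis and makes the role of conjugation transparent, at the cost of a heavier computation; the paper's route isolates the single operator-algebraic input (locating $\hat\rho_k(z_1)$) and lets the semiring axioms do the rest --- the two are consistent, both ultimately resting on \eqref{eq:prelim-fusion2}--\eqref{eq:prelim-fusion3}. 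Two points you should make explicit: checking additivity only on projections of $\KK$ itself suffices because $V_0(\KK)$ generates $K_0(\KK)$ (noted in the paper just before the theorem); and deducing injectivity of $\eta$ on $\Ring$ from faithfulness of the left regular representation of $\RRing$ uses that $\Ring$ embeds in its Grothendieck ring, which holds by unique decomposition into irreducibles (alternatively, evaluate $\eta_{[\rho]}$ at the vacuum generator, as the paper does).
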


\begin{proof}
Every localized endomorphism $\rho$ of $C^*(\A)$ with finite statistical dimension localized in an arbitrary but fixed interval $I_0\in\I$ induces a normal $*$-homomorphism $\hat{\rho}$ of $C^*_{\red}(\A)$ satisfying $\hat{\rho}\circ\pi_{\red}=\pi_{\red}\circ\rho$ (\cf Proposition \ref{th:univ-K-inCA} (2)); moreover, it preserves the subalgebra $\KK\subset C^*_{\red}(\A)$. Let $\rho_i$, for $i=1,...,n$, be endomorphisms of 
$C^*(\A)$ all covariant and localized in $I_0$, with $\rho_1$ is the identity endomorphism $\id$ and such that $\pi_0 \circ \rho_i$, $i=1,...,n$, are pairwise inequivalent irreducible representations. Accordingly we can assume that 
$$\pi_{\red}=\bigoplus_{i=1}^{n} \pi_0 \circ \rho_i .$$

We have to show that there is an injective semiring homomorphism
\[
\eta:\Ring\ra \End(K_0(\KK)),
\]
which will then define the action. We proceed in four steps.

(1) \emph{Construction and well-definedness of $\eta$.} Notice that every element in $\KK$ can be written as $\pi_{\red}(x)$, with a suitable $x\in C^*(\A)$. 
Let $\rho$ and $\sigma$ be unitarily equivalent covariant endomorphisms of $C^*(\A)$ localized in $I_0$. Accordingly, there is 
a unitary $u\in C^*(\A)$ such that $\sigma = \Ad(u)\circ \rho$.  Hence we find
\begin{align*}
 \hat{\sigma}(\pi_{\red}(x))=& \pi_{\red}(\sigma(x)) = \pi_{\red}(u\rho(x)u^*) \\
=&  \pi_{\red}(u) \pi_{\red}(\rho(x)) \pi_{\red}(u)^* = \pi_{\red}(u) \hat{\rho}(\pi_{\red}(x)) \pi_{\red}(u)^*, \quad x\in C^*(\A). 
\end{align*}
This shows that $\hat{\sigma}$ and $\hat{\rho}$ are unitarily equivalent in $C^*_{\red}(\A)$ via $u_{\red}:=\pi_{\red}(u)$. The $*$-homomorphisms $\hat{\rho}$ and $\hat{\sigma}$ act in K-theory by push-forward. For a projection $p\in \KK$, we have
$\hat{\sigma}(p)=u_{\red}\hat{\rho}(p)u_{\red}^*$. Set $w=\hat{\rho}(p)u_{\red}^* \in \KK$. 
Then $ww^*= \hat{\rho}(p)$ and $w^*w=  \hat{\sigma}(p)$ and hence  $(\hat{\sigma}|_{\KK})_*([p])=(\hat{\rho}|_{\KK})_*([p])$.      
Since $K_0(\KK)$ is generated by equivalence classes of projections in $\KK$, we see that 
$(\hat{\rho}|_{\KK})_*=(\hat{\sigma}|_{\KK})_*$ on all $K_0(\KK)$; in other words, $(\hat{\rho}|_{\KK})_*$ depends only on the equivalence class $[\rho]\in\Ring$ of $\rho$, and

\begin{equation}\label{eq:def-alpha}
\eta: [\rho]\in\Ring \mapsto (\hat{\rho}|_{\KK})_* \in \End(K_0(\KK))
\end{equation}
is well-defined.

(2) \emph{Additivity.} Since $\eta_{[\rho]}$ is a group homomorphism, it suffices to check additivity of the action on the generators 
in $V_0(\KK)$ of $K_0(\KK)$. Given $[\rho],[\sigma]\in\Ring$, recall from Remark \ref{rem:CFT-univC} $(4)$ that their sum in $\Ring$ is defined as
\[
[\rho]+[\sigma] = [\lambda],
\]
where $\lambda$ is any covariant endomorphism of $C^*(\A)$ localized in $I_0$ and such that $\pi_0 \circ \lambda$ is equivalent to  
$\pi_0\circ \rho \oplus \pi_0 \circ \sigma$. Then we can write 
$\lambda=v_1\rho(\cdot) v_1^* + v_2 \sigma(\cdot)v_2^*$
with isometries $v_1,v_2\in\A(I_0)$ defining a Cuntz algebra as a unital subalgebra of $\A(I_0)$. 
Since, for any $i$, $v_{i,\red}:= \pi_{\red}(v_i)$ is again an isometry (now in $C^*_{\red}(\A)$), given a minimal projection 
$p\in\KK$, $v_{i,\red}p v_{i,\red}^*$ is again a minimal projection in $\KK$ having the same central support of $p$ in 
$C^*_{\red}(\A)$.  Accordingly $p$ and $v_{i,\red}p v_{i,\red}^*$ are equivalent in $C^*_{\red}(\A)$ and hence they are equivalent in 
the ideal $\KK$. 
Therefore
\begin{align*}
 \eta_{[\rho]+[\sigma]}([p])
=& \eta_{[v_1\rho(\cdot) v_1^* + v_2 \sigma(\cdot)v_2^*]}([p])\\
=& [v_{1,\red}\hat{\rho}(p) v_{1,\red}^* +  v_{2,\red} \hat{\sigma}(p) v_{2,\red}^*]\\
=& [v_{1,\red}\hat{\rho}(p) v_{1,\red}^*] +  [v_{2,\red} \hat{\sigma}(p) v_{2,\red}^*]\\
=& [\hat{\rho}(p)] + [\hat{\sigma}(p)] \\
=& \eta_{[\rho]}([p]) +\eta_{[\sigma]}([p]),
\end{align*}
\ie additivity holds.

(3) \emph{ Multiplicativity.} According to the above general discussion of push-forwards of 
$*$-homomorphisms to K-theory, we always have, for $[\rho],[\sigma]\in\Ring$:
\[
\eta_{[\rho]} \circ \eta_{[\sigma]} = (\hat{\rho}|_{\KK})_* \circ (\hat{\sigma}|_{\KK})_*
= ((\hat{\rho}\hat{\sigma})|_{\KK})_*= \eta_{[\rho] [\sigma]}.
\]

Summing up, $\eta:\Ring\ra \End(K_0(\KK))$ is a well-defined nontrivial homomorphism of semirings.

(4) \emph{Faithfulness and regular representation.} Recall the direct sum decomposition
\[
\K_1\oplus ...\oplus \K_n = \KK \subset C^*_{\red}(\A) =\bigoplus_{l=1}^n \pi_0\circ\rho_l (C^*(\A)).
\]

Let $z_1, z_2,\dots , z_n$ be the minimal projections of ${\mathcal Z}(\KK)$ ordered in the natural way. 
Accordingly $ z_i$ is the central support corresponding to the subrepresentation $\pi_0 \circ \rho_i$ for 
$k=1,\dots , n$. Now let $\tilde{z}_i \in {\mathcal Z}(W^*(\A))$ be the central support corresponding to any subrepresentation 
of the universal representation of $C^*(\A)$ unitarily equivalent to $\pi_0 \circ \rho_i$. Then, if $\tilde{\pi}_{\red}$ denotes the unique 
normal extension of $\pi_{\red}$ to the enveloping von Neumann algebra $W^*(\A)$ of $C^*(\A)$ then 
$\tilde{\pi}_{\red}(\tilde{z_i})=z_i$. More generally, for every representation
 $\pi$ of $C^*(\A)$ on a Hilbert space $\H_\pi$, $\tilde{\pi}(\tilde{z}_i)$ is the central support corresponding to any 
 subrepresentation of $\pi$ equivalent to $\pi_0 \circ \rho_i$. Now recalling that for $k=1, \dots, n$ we have 
 $\hat{\rho}_k \circ \pi_{\red} = \pi_{\red} \circ \rho_k$ we can conclude that $\hat{\rho}_k(z_1)$ is the central support 
 corresponding to any subrepresentation of $\pi_{\red} \circ \rho_k$ unitarily equivalent to the vacuum representation $\pi_0$.
 But, by the properties of the conjugate endomorphisms  $\pi_0$ is equivalent to a subrepresentation of 
 $\pi_0\circ \rho_i \bar{\rho}_k$ iff $i =k$. Moreover, for $i=k$, the multiplicity is one. Accordingly, $\hat{\rho}_k(z_1)$ is a minimal 
 projection in $\hat{\rho}_k(C^*_{\red}(\A))'$ and $\hat{\rho}_k(z_1) \leq z_k $.  It follows that if $p_1$ is a minimal projection in 
 $\KK$ with nonzero entry in $\K_1$, i.e. $p_1 \leq z_1$, then  $\hat{\rho}_k(p_1) = \hat{\rho}_k(p_1)\hat{\rho}_k(z_1)$ is 
 a minimal projection in $\KK$ such that $\hat{\rho}_k(p_1)z_k = \hat{\rho}_k(p_1)$.

The equivalence class in $K_0(\KK)$ of $\hat{\bar{\rho}}_k(p_1)$ does not depend on the actual choice of the minimal projection and is consequently precisely the $k$-th generator of $K_0(\KK)\simeq \Z^n$. In other words, $[\hat{\bar{\rho}}_k(p_1)]$, with $k=1,...,n$, is the canonical basis of $K_0(\KK)$. Notice that involutivity of conjugation on the equivalence classes implies that, for every $l$ there is a unique $k$ such that 
$[\bar{\rho}_l]=[\rho_k]$. This shows that
\[
e_k:=[\hat{\rho}_k(p_1)] = \eta_{[\rho_k]}(e_1), \quad k=1,...,n,
\]
defines again a basis of $K_0(\KK)$, which is just a permutation of the canonical one.

With this choice, $\eta$ is easily seen to correspond to the regular representation of the fusion rules of $\Ring$, namely for $i,j=1,...,n$, we have
\begin{equation}\label{eq:KK-leftreg}
\begin{aligned}
\eta_{[\rho_i]}(e_j) =&\eta_{[\rho_i]}\eta_{[\rho_j]}(e_1)
= \eta_{[\rho_i]\cdot[\rho_j]} (e_1)
= \eta_{\sum_{k=1}^n N_{i,j}^{k} [\rho_k]}(e_1)\\
=& \sum_{k=1}^n N_{i,j}^{k} \eta_{[\rho_k]}(e_1)
= \sum_{k=1}^n N_{i,j}^{k} e_k .
\end{aligned}
\end{equation}

Here we made use of additivity and multiplicativity of $\eta$, and of the fusion matrix introduced in \eqref{eq:prelim-fusion1}. In particular, for $j=1$ and arbitrary $[\rho]\in\Ring$, which can always be uniquely decomposed as $\sum_{i=1}^n m_i [\rho_i]$ with certain 
$m_i\in\NN$, we find 
\[
\eta_{[\rho]}(e_1)= \sum_i m_i e_i,
\]
so the map $[\rho]\in\Ring \mapsto \eta_{[\rho]}\in\End(K_0(\KK))$ is actually injective.

Now let $\RRing$ be the Grothendieck group of the additive semigroup $\Ring$. From the above discussion we see that the 
the map $[\rho] \mapsto \eta_{[\rho]}(e_1)$ is a semigroup isomorphism $\beta$ of additive semigroups from $\Ring$ into $K_0(\KK)$ whose extension to $\RRing$ gives rise to a surjective group isomorphism $\tilde{\beta} : \RRing \to K_0 (\KK)$ and the action $\eta$ of 
$\Ring$ uniquely extends to an action $\tilde{\eta} : \RRing \to \End (K_0(\KK))$. 
Now, $\C \otimes_{\Z} \RRing$ is a fusion algebra with distinguished basis 
$1\otimes_\Z [\rho_1] \dots 1\otimes_\Z [\rho_n]$, see e.g. \cite{Yam99} for the definition, and $\id \otimes \tilde{\eta}$ 
is a representation of $\C \otimes_{\Z} \RRing$ on the $n$-dimensional complex vector space $\C \otimes_{\Z} K_0(\KK)$. 
For $[\sigma] , [\rho] \in \Ring \subset \RRing$ we have 

$$\left(\id \otimes_{\Z} \tilde{\beta}\right)^{-1}\left(\id \otimes_{\Z} \tilde{\eta}_{[\sigma]} \right)\left(\id \otimes_{\Z} \tilde{\beta} \right)
\left( 1 \otimes_{\Z} [\rho] \right) = [\sigma] [\rho] $$
and hence 
$$\left(\id \otimes_{\Z} \tilde{\beta}\right)^{-1}\left(\id \otimes_{\Z} \tilde{\eta}\right)_x \left(\id \otimes_{\Z} \tilde{\beta} \right) y
 = x y $$
for all $x, y \in \C \otimes_{\Z} \RRing$, namely $\left(\id \otimes_{\Z} \tilde{\beta}\right)^{-1}\left(\id \otimes_{\Z} \tilde{\eta}\right) \left(\id \otimes_{\Z} \tilde{\beta} \right)$ is the regular representation of the commutative fusion algebra $\C \otimes_{\Z} \RRing$.
  \end{proof}

The group isomorphism $\RRing \simeq K_0(\KK)$ might be used in order to define a fusion ring structure on $K_0(\KK)$. Obviously this structure is not encoded in the  C*-algebra $\KK$ on its own but depends on the underlying net $\A$ through the action on 
$\KK$ of the DHR endomorphisms.  If $\A_1$ and $\A_2$ are two completely rational conformal nets with the same number of sectors 
then ${\mathfrak K}_{\A_1}$ and ${\mathfrak K}_{\A_2}$ are isomorphic  C*-algebras but typically 
$\tilde{\mathcal R}_{\A_1}$ and $\tilde{\mathcal R}_{\A_2}$ are not isomorphic fusion rings. 

\vspace{
\baselineskip}
 
{\small
 
\noindent\textbf{Acknowledgements.} 

We thank R. Longo for bringing to our attention some of the issues discussed in the main text and for useful discussions. We also thank M. Yamashita for comments on an earlier draft of this paper.  Three of us (S. C., R. H. and M. W.) benefitted of a visit at RIMS, Kyoto University in occasion of the program ``Operator algebras and their applications". They thank the organizers for the invitation.

}
 
\end{document}